\documentclass[a4paper,UKenglish,cleveref, numberwithinsect, thm-restate,authorcolumns]{lipics-v2019_modified}


\usepackage{amssymb,amsthm}
\usepackage{amsfonts}
\usepackage{amsmath}
\usepackage{mathtools}
\usepackage{cancel}
\usepackage{stmaryrd}
\SetSymbolFont{stmry}{bold}{U}{stmry}{m}{n}
\usepackage{tikz}
\usetikzlibrary{calc}
\usetikzlibrary{automata, positioning}
\usepackage{float}
\usepackage{latexsym}
\usepackage{etoolbox}
\usepackage{cutwin}
\usepackage{mathrsfs}
\usepackage{anyfontsize}
\usepackage{microtype}


\DeclareMathOperator{\A}{A}
\DeclareMathOperator{\D}{D}

\DeclareMathOperator{\R}{R}
\DeclareMathOperator{\dd}{\mathbf d}
\DeclareMathOperator{\rr}{\mathbf r}

\DeclareMathOperator{\id}{1^\prime}

\newcommand \compo {\mathbin{;}}
\newcommand \ccompo {\mathbin{\cdot}}
\newcommand \pref {\mathbin{\sqcup}}

\newcommand \bjoin {\vee}

\newcommand \limplies {\mathbin{\rightarrow}}

\newcommand \aand \wedge




\newcommand{\from}{\colon}

\newcommand{\class}[1]{\mathbf{#1}}

\newcommand{\algebra}[1]{\mathfrak{#1}}
\newcommand{\topo}[1]{\mathcal{#1}}

\newcommand{\defn}[1]{\textbf{#1}}

\newcommand\pf{\operatorname{PF}}
\newcommand\cpt{\operatorname{SecCl}}

\makeatletter
\newcommand\mynobreakpar{\par\nobreak\@afterheading}
\makeatother


\newtheorem{problem}[theorem]{Problem}
\newtheorem{lemma3}[theorem]{Lemma}
\newtheorem{lemma4}[theorem]{Lemma}
\newtheorem{lemma5}[theorem]{Lemma}
\newtheorem{lemma6}[theorem]{Lemma}
\newtheorem{definition4}[theorem]{Definition}



\bibliographystyle{plainurl}

\title{A categorical duality for algebras of partial functions} 

\titlerunning{A categorical duality for algebras of partial functions} 

\author{Brett McLean}{Laboratoire J. A. Dieudonn\'e UMR CNRS 7351, Universit\'e Nice Sophia Antipolis, 06108~Nice~Cedex~02 \and \url{https://math.unice.fr/~bmclean/} }{brett.mclean@unice.fr}{https://orcid.org/0000-0003-2368-8357}{}


\authorrunning{B. McLean} 

\Copyright{Brett McLean} 

\begin{CCSXML}
<ccs2012>
<concept>
<concept_id>10003752.10010124.10010131.10010132</concept_id>
<concept_desc>Theory of computation~Algebraic semantics</concept_desc>
<concept_significance>500</concept_significance>
</concept>
<concept>
<concept_id>10003752.10003766.10003773.10003774</concept_id>
<concept_desc>Theory of computation~Transducers</concept_desc>
<concept_significance>300</concept_significance>
</concept>
</ccs2012>
\end{CCSXML}

\ccsdesc[500]{Primary: 06F05}
\ccsdesc[300]{Secondary: 20M30, 06E15, 20M35} 

\keywords{partial function, duality, Stone space, finite state transducer} 

\category{} 


\supplement{}

\acknowledgements{The author would like to thank Mai Gehrke for suggesting the project of a partial function duality building on the ideas in \cite{Lawson_2010} and for several helpful discussions.}

\funding{This project has received funding from the European Research Council (ERC) under the European Union's
Horizon 2020 research and innovation program (grant agreement No. 670624).}

\nolinenumbers 

\hideLIPIcs  

\EventEditors{John Q. Open and Joan R. Access}
\EventNoEds{2}
\EventLongTitle{42nd Conference on Very Important Topics (CVIT 2016)}
\EventShortTitle{CVIT 2016}
\EventAcronym{CVIT}
\EventYear{2016}
\EventDate{December 24--27, 2016}
\EventLocation{Little Whinging, United Kingdom}
\EventLogo{}
\SeriesVolume{42}
\ArticleNo{23}

\begin{document}

\maketitle

\begin{abstract}
We prove a categorical duality between a class of abstract algebras of partial functions and a class of (small) topological categories. The algebras are the isomorphs of collections of partial functions closed under the operations of composition, antidomain, range, and preferential union (or `override'). The topological categories are those whose space of objects is a Stone space, source map is a local homeomorphism, target map is open, and all of whose arrows are epimorphisms. 
\end{abstract}

\section{Introduction}

Variants and extensions of Stone duality are pervasive in logic and computer science: for example in modal \cite{goldblatt}, intuitionistic \cite{esakia}, substructural \cite{10.2307/27588391}, and many-valued \cite{GEHRKE2014290} logic, and in semantics \cite{ABRAMSKY19911}, formal language theory \cite{gehrke2008duality}, and logics for static analysis \cite{DOCHERTY2018101}. In its most basic form---between Boolean algebras and Stone spaces---it provides a duality for the isomorphs of \emph{algebras of unary relations} equipped with the union and complement operations. One extremely prominent `real world' example of such an algebra is the set of regular languages (for a fixed finite alphabet). Indeed, recently it has been shown how the presence of an extended Stone duality is the explanation behind many of the great successes of algebraic language theory \cite{gehrke2008duality, gehrke2009stone}. 

Another genre of algebras of relations that has been studied is \emph{algebras of partial functions} \cite{1018.20057, DBLP:journals/ijac/JacksonS11, hirsch, phdthesis}. Here the algebras are formed of collections of partial functions closed under certain natural operations such as composition or `preferential union'. Again, pre-existing examples can be found in automata/formal language theory. \emph{Transducers} are finite state machines that take words as input and produce words as output. In general they realise word-to-word relations, but certain collections of word-to-word partial functions defined via transducers are considered important---we think particularly of the \emph{rational functions} and the \emph{regular functions} \cite{10.1145/2984450.2984453}.

In this paper, motivated by the utility of duality applied to regular languages, we give a description of a duality (\Cref{main-theorem}) applicable to both the rational functions and the regular functions. Specifically, on one side is the category of isomorphs of algebras of partial functions equipped with four operations: \emph{composition}, \emph{antidomain}, \emph{range}, and \emph{preferential union} (see \Cref{preliminaries}). On the other side is the category of (small) categories each equipped with a topology and satisfying certain extra conditions (see \Cref{category_definitions}). This duality is a partial function analogue of a duality due to Mark Lawson between a certain subclass of inverse semigroups and certain topological groupoids \cite{Lawson_2010}. Recall that the inverse semigroups are the isomorphs of \emph{injective} partial functions. In fact the duality presented here not only broadens the scope from injective to arbitrary partial functions, but also generalises which morphisms of algebras are handled. In our duality, the morphisms of algebras are exactly the homomorphisms, whereas in \cite{Lawson_2010} they are a restricted type of homomorphism only.

Given that partial functions are just a special type of binary relation, we are compelled to acknowledge the numerous dualities applicable to, or even designed specifically for, algebras of binary relations. The algebras featuring in these dualities span a range from the very highly structured Boolean algebras with operators \cite[Chapter 5: Algebras and General Frames]{blackburn_rijke_venema_2001} down to the much more general cases of (bounded) distributive lattices with completely arbitrary additional operations \cite{10.2307/24493402} and of posets with monotone operations \cite{10.2307/27588391}. Two remarks are in order here. Firstly, none of the mentioned dualities are applicable to our algebras, since on the one hand they are not distributive lattices, but on the other hand not all their operations are monotone. Secondly, the use of a general theory would not be optimal in any case, since our algebras \emph{do} possess a great deal of structure, reflecting their concrete origins.

\medskip

\noindent{\bf Structure of the paper}\ \ In \Cref{preliminaries} we formally define our algebras of functions and the category they constitute. In \Cref{category_definitions} we do the same for the small topological categories on the other side of our duality. In \Cref{algebra_to_category}, we describe one half of the duality: the functor from algebras to topological categories. In \Cref{category_to_algebra}, we describe the remaining half of the duality: the functor from topological categories to algebras. In \Cref{are_dual}, we prove that these two functors do indeed form a contravariant equivalence of categories. In \Cref{applications}, we say a little about the duality as it applies to the rational and regular functions. 

\section{Algebras of functions}\label{preliminaries}

Given an algebra $\algebra{A}$, when we write $a \in \algebra{A}$ or say that $a$ is an element of $\algebra{A}$, we mean that $a$ is an element of the domain of $\algebra{A}$. Similarly for the notation $S \subseteq \algebra{A}$ or saying that $S$ is a subset of $\algebra{A}$. We follow the convention that algebras are always nonempty. If $S$ is a subset of the domain of a map $\theta$ then $\theta[S]$ denotes the set $\{\theta(s) \mid s \in S\}$. As is common in algebraic logic, compositions denoted with the symbol $\compo$ are written with the first composee on the left, that is, contrary to the usual mathematical convention. We will, however, also use the conventional $\circ$ notation with the conventional ordering in situations having no connection to the composition of partial functions. If $S$ and $T$ are subsets of $\algebra A$, then we abuse notation by writing $S \compo T$ for $\{s \compo t \mid s \in S\text{ and }t \in T\}$ and abuse further by writing $S \compo a$ and $a \compo S$ for $S \compo \{a\}$ and $\{a\} \compo S$ respectively.

We begin by making precise what is meant by partial functions and algebras of partial functions.

\begin{definition}
Let $X$ be a set. A \defn{partial function} on $X$ is a subset $f$ of $X \times X$ satisfying\begin{equation*}
(x, y) \in f \text{ and } (x, z) \in f \implies y = z.
\end{equation*}
\end{definition}

\begin{definition}\label{algebra}
Let $\sigma \subseteq \{\compo, \A, \R, \pref\}$ be a functional signature, where $\compo$ and $\pref$ are binary and $\A$ and $\R$ are unary. An \defn{algebra of partial functions} of the signature $\sigma$ is a universal algebra $\algebra A = (A, \sigma)$ where the elements of the universe $A$ are all partial functions on some (common) set $X$, the \defn{base}, and the interpretations of the symbols are given as follows.
\begin{itemize}
\item
The binary operation $\compo$ is \defn{composition} of partial functions.

\item
The unary operation $\A$ is the operation of taking the diagonal of the \defn{antidomain} of a partial function:
\[\A(f) \coloneqq \{(x, x) \in X^2 \mid \not\exists y \in X : (x, y) \in f\}\text{.}\]

\item
The unary operation $\R$ is the operation of taking the diagonal of the \defn{range} of a partial function:
\[\R(f) \coloneqq \{(y, y) \in X^2 \mid \exists x \in X : (x, y) \in f\}\text{.}\]
\item
the binary operation $\pref$ is \defn{preferential union}:
\[(f \pref g)(x) =
\begin{cases}
f(x) & \text{if }f(x)\text{ defined}\\
g(x) & \text{if }f(x)\text{ undefined, but }g(x)\text{ defined}\\
\text{undefined} & \text{otherwise}
\end{cases}\]
\end{itemize}
\end{definition}

Note that (despite the symmetry of the symbol $\pref$) preferential union is not generally a commutative operation, though it is associative.

\begin{definition}
An algebra $\algebra A$ of the signature $\sigma$ is \defn{representable} by partial functions if it is isomorphic to an algebra of partial functions of the signature $\sigma$. An isomorphism from $\algebra A$ to an algebra of partial functions is a \defn{representation} of $\algebra A$.
\end{definition}

We begin by looking at representable $\{\compo, \A, \R\}$-algebras, but we will soon see that the algebras we are interested in are equivalent to the representable $\{\compo, \A, \R, \pref\}$-algebras.

\begin{remark}
Note that the constants $0$ (empty function) and $\id$ (identity function), the operation $\D$ (domain), and the relation $\leq$ (subset) are all definable in the signature $\{\compo, \A, \R\}$. That is, the term $0 \coloneqq \A(a) \compo a$, the term $\id \coloneqq \A(0)$, and the term $\D(a) \coloneqq \A(\A(a))$ are all necessarily represented in the intended way by any representation, and the relation 
\begin{equation*}\label{ordering}a \leq b \iff \D(a) \compo b = a\end{equation*}
 corresponds via any representation precisely to the subset relation on the image of the representation.
\end{remark}

Statements involving order will always be with respect to $\leq$.

\begin{remark}
The representable $\{\compo, \A, \R\}$-algebras form a proper quasivariety, axiomatised by a finite number of quasiequations \cite[Theorem~4.1]{hirsch}.
\end{remark}

\begin{definition}
Two elements $a$ and $b$ of an algebra of the signature $\{\compo, \A, \R\}$ are \defn{compatible} if $\D(a) \compo b = \D(b) \compo a$.
\end{definition}

Clearly in any representable $\{\compo, \A, \R\}$-algebra the compatibility relation expresses precisely that for any representation the representing functions agree on their common domain. In such an algebra compatibility is necessary for the existence of a least upper bound of a pair $a$ and $b$. If $a$ and $b$ have an upper bound, $c$ say, they have a least upper bound (join) given by $\A(\A(a) \compo \A(b)) \compo c$. From this term, we see that in concrete algebras any binary joins must be given by binary unions.

\begin{lemma}\label{preserves}
Homomorphisms of representable $\{\compo, \A, \R\}$-algebras preserve binary joins.
\end{lemma}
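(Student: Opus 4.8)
The plan is to lean entirely on the explicit term formula for binary joins recorded just before the statement: whenever $a$ and $b$ possess an upper bound $c$, their join is $\A(\A(a) \compo \A(b)) \compo c$. Since the right-hand side is a term in the signature $\{\compo, \A, \R\}$ and homomorphisms preserve all such terms, preservation of joins should follow almost formally, the only genuine task being to exhibit, on the target side, an element playing the role of $c$.

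First I would fix a homomorphism $\theta \from \algebra A \to \algebra B$ and a pair $a, b \in \algebra A$ whose join $a \vee b$ exists, and note that $\theta$ preserves the order $\leq$. This is immediate from the definition $x \leq y \iff \D(x) \compo y = x$ together with the fact that $\theta$ preserves $\compo$ and $\A$ (hence also $\D$, as $\D(x) = \A(\A(x))$): applying $\theta$ to the defining equation of $a \leq b$ gives $\D(\theta(a)) \compo \theta(b) = \theta(a)$, that is, $\theta(a) \leq \theta(b)$. In particular, from $a \leq a \vee b$ and $b \leq a \vee b$ I obtain that $\theta(a \vee b)$ is an upper bound of $\theta(a)$ and $\theta(b)$ in $\algebra B$. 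Now I would apply the quoted formula twice. In $\algebra B$, since $\theta(a)$ and $\theta(b)$ admit the upper bound $\theta(a \vee b)$, their join exists and equals $\A(\A(\theta(a)) \compo \A(\theta(b))) \compo \theta(a \vee b)$; because $\theta$ commutes with $\A$ and $\compo$, this is exactly $\theta\bigl(\A(\A(a) \compo \A(b)) \compo (a \vee b)\bigr)$. On the other hand, $a \vee b$ is itself an upper bound of $a$ and $b$, so the formula applied in $\algebra A$ with $c = a \vee b$ collapses the inner term to $\A(\A(a) \compo \A(b)) \compo (a \vee b) = a \vee b$. Combining these gives $\theta(a) \vee \theta(b) = \theta(a \vee b)$, as required.

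I do not expect a substantial obstacle: the real content sits in the pre-lemma observation that the join is term-definable from any witnessing upper bound, after which the lemma is a matter of assembling the pieces correctly. The one step deserving attention is the self-referential use of the formula in $\algebra A$, feeding the join itself back in as the upper bound $c$; this is what lets me push $\theta$ through and recover $\theta(a \vee b)$ exactly, rather than merely establishing $\theta(a) \vee \theta(b) \leq \theta(a \vee b)$. I would also take care to confirm that the quoted formula holds for \emph{every} upper bound $c$, not just a distinguished one, since the argument substitutes two different choices; this is transparent in any concrete representation, where $\A(\A(a) \compo \A(b))$ is the subidentity on $\D(a) \cup \D(b)$ and composing it with an upper bound $c$ merely restricts $c$ to that domain, returning the union $a \cup b$.
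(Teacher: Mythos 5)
Your proof is correct and follows essentially the same route as the paper: both arguments hinge on order preservation of the homomorphism together with the pre-lemma term formula $\A(\A(a) \compo \A(b)) \compo c$ for the join from an arbitrary upper bound $c$. The only difference is presentational --- the paper passes to a concrete representation and pins down $h(a \vee b)$ as the union $h(a) \cup h(b)$ via two inclusions, whereas you stay purely equational by applying the term formula a second time, inside $\algebra{B}$, with the witness $\theta(a \vee b)$ --- a slightly cleaner packaging of the same idea.
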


\begin{proof}
Let $h \from \algebra A \to \algebra B$ be a homomorphism of representable $\{\compo, \A, \R\}$-algebras, and suppose $a, b \in \algebra A$ have an upper bound. Since the algebras are representable, we may assume they are algebras of partial functions. As $\leq$ is defined by an equation, $h$ is order preserving, so $h(a \bjoin b)$ is an upper bound for $\{h(a), h(b)\}$, that is, $h(a), h(b) \subseteq h(a \bjoin b)$. On the other hand
\begin{align*}
h(a \bjoin b) &= h((\D(a)\bjoin \D(b)) \compo (a \bjoin b))\\
&= (\D(h(a)) \cup \D(h(b))) \compo h(a \bjoin b).
\end{align*}
(We know joins correspond to unions on the subalgebra of elements of the form $\D(-)$, since join is expressible there, as $\A(\A(-) \compo \A (-))$.) Hence $h(a \bjoin b) \subseteq h(a) \cup h(b)$, and so $h(a \bjoin b)$ is the smallest possible upper bound for $h(a)$ and $h(b)$, namely $h(a) \cup h(b)$.
\end{proof}

Let $\class A$ be the subclass of the representable $\{\compo, \A, \R\}$-algebras consisting of those validating the first-order condition that every compatible pair has an upper bound.

\begin{corollary}\label{isomorphic}
The category consisting of $\class A$ with $\{\compo, \A, \R\}$-homomorphisms is isomorphic to the category of representable $\{\compo, \A, \R, \pref\}$-algebras with $\{\compo, \A, \R, \pref\}$-homo\-morphisms.
\end{corollary}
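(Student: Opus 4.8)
The plan is to exhibit the isomorphism concretely via the two obvious ``expand/forget'' functors and to show they are mutually inverse. The crucial algebraic observation, which I would establish first, is that in any algebra of partial functions the preferential union is recovered from the $\{\compo, \A, \R\}$-structure by
\[
a \pref b = a \bjoin (\A(a) \compo b),
\]
where the right-hand side is a join (least upper bound). Indeed, as partial functions $\A(a) \compo b$ is $b$ restricted to the antidomain of $a$, so $a$ and $\A(a) \compo b$ have disjoint domains; set-theoretically their union is exactly the preferential union, and since disjoint domains force compatibility (a short computation using $\D(a) \compo \A(a) = 0$ and $\A(a) \compo a = 0$), this union is their join. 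Thus $\pref$ is determined by the order, hence by the $\{\compo, \A, \R\}$-operations.

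Next I would define the functor $F$ from $\class A$ to the representable $\{\compo, \A, \R, \pref\}$-algebras. On an object $\algebra A \in \class A$, the pair $a, \A(a) \compo b$ is always compatible, so by the defining condition of $\class A$ it has an upper bound and therefore a join; defining $a \pref b$ to be this join yields a total binary operation and hence an algebra $F(\algebra A)$ of signature $\{\compo, \A, \R, \pref\}$. To see $F(\algebra A)$ is representable, I would take any representation $\rho$ of $\algebra A$: since $\rho$ carries joins to unions, $\rho(a \pref b) = \rho(a) \cup (\A(\rho(a)) \compo \rho(b)) = \rho(a) \pref \rho(b)$, so $\rho$ is simultaneously a representation respecting $\pref$. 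On morphisms $F$ is the identity: a $\{\compo, \A, \R\}$-homomorphism $h \from \algebra A \to \algebra B$ preserves $\pref$ because, by \Cref{preserves}, it preserves the relevant binary join---$h(a)$ and $\A(h(a)) \compo h(b)$ are compatible with upper bound $h(a \pref b)$, so $h(a \pref b) = h(a) \bjoin (\A(h(a)) \compo h(b)) = h(a) \pref h(b)$.

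In the other direction I would define $G$ to be the forgetful functor discarding $\pref$. Its reduct lands in $\class A$: representability is inherited, and any compatible pair $a, b$ has the upper bound $a \pref b$ (compatibility makes $a \pref b$ extend both $a$ and $b$), so the defining condition of $\class A$ holds; on morphisms, a $\{\compo, \A, \R, \pref\}$-homomorphism is a fortiori a $\{\compo, \A, \R\}$-homomorphism. Finally $GF = \mathrm{id}$ is immediate, and $FG = \mathrm{id}$ follows from the key identity: in a representable $\{\compo, \A, \R, \pref\}$-algebra the original $\pref$ is represented by preferential union of functions, which equals the union $\rho(a) \cup (\A(\rho(a)) \compo \rho(b))$, i.e.\ the join; so re-deriving $\pref$ via $F$ recovers the same operation.

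The step I expect to require the most care is confirming that $F$ is well defined at the level of objects---that the added operation is genuinely total and that the expanded algebra is representable rather than merely abstract. Everything there hinges on the compatibility of $a$ and $\A(a) \compo b$ together with the fact (already used in the proof of \Cref{preserves}) that joins correspond to unions under any representation; once that is pinned down, functoriality and the two identities $GF = \mathrm{id}$, $FG = \mathrm{id}$ are routine.
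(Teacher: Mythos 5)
Your proposal is correct and follows essentially the same route as the paper: both hinge on the definability of $\pref$ via the term $a \pref b = a \bjoin (\A(a) \compo b)$ in algebras of $\class A$, the inverse observation that $a \bjoin b = a \pref b$ for compatible $a, b$ in a representable $\{\compo, \A, \R, \pref\}$-algebra, and \Cref{preserves} to see that $\{\compo, \A, \R\}$-homomorphisms preserve $\pref$. The paper's proof is simply a compressed version of yours, leaving the functoriality and the mutual-inverse checks implicit.
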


\begin{proof}
In any representable $\{\compo, \A, \R\}$-algebra in which compatible pairs have upper bounds, the operation $\pref$ is definable as $a \pref b \coloneqq a \bjoin \A(a) \compo b$. And there is an inverse interpretation of any representable $\{\compo, \A, \R, \pref\}$-algebra as a representable $\{\compo, \A, \R\}$-algebra with compatible joins, since for compatible $a$ and $b$ we have $a \bjoin b = a \pref b$. It remains to see that the morphisms are the same. Since $\{\compo, \A, \R\}$-homomorphisms preserve binary joins, they must preserve $\pref$, since $\pref$ is then definable in terms of preserved operations.
\end{proof}

Let $\Sigma$ be a finite alphabet. The \defn{rational} functions are the partial functions from $\Sigma^*$ to $\Sigma^*$ realisable by a one-way transducer. The \defn{regular} functions are the partial functions from $\Sigma^*$ to $\Sigma^*$ realisable by a two-way transducer. (See \cite{10.1145/2984450.2984453} for definitions of the various types of transducers.) The rational and the regular functions are both closed under composition, antidomain, and range and also under the partial operation of compatible union. These classes of partial functions are \emph{not} closed under other familiar operations that we may be tempted to include in the signature, such as intersection and relative complement. This is the reason for our interest in the  class $\class A$.

We mention one other important class of partial functions important to the theory of transducers. The \defn{sequential} functions are those partial functions realised by one-way \emph{input-deterministic} transducers. However, the sequential functions do not fit within our framework for the reason that they are not closed under compatible unions. (For example, the sequential functions $a^n \mapsto a^n$ and $a^n b \mapsto b^n$ have disjoint domains, hence are compatible, but their union is not sequential.)

\medskip

In view of \Cref{isomorphic}, we can choose to work with the representable $\{\compo, \A, \R, \pref\}$-algebras, in lieu of $\class A$, and henceforth that is what we will do. This pays off immediately: the class has a syntactically simple finite axiomatisation (and therefore is algebraically well behaved).

\begin{theorem}[{Hirsch, Jackson, and Mikul\'as \cite[Corollary~4.2 + Lemma~3.6]{hirsch}}]\label{axiomatisation}
The representable $\{\compo, \A, \R, \pref\}$-algebras form a proper quasivariety, axiomatised by the following finite list of equations and quasiequations.
\begin{align}
&&&&a \compo (b \compo c) &= (a \compo b) \compo c\\
&&&&\A(a) \compo a &= \A(b) \compo b\\
&&&&{\id} \compo a &= a\\
&&&&a \compo \A(b) &= \A(a \compo b) \compo a\\
\D(a) \compo b = \D(a) \compo c \;\;\land\;\; \A(a) \compo b &= \A(a) \compo c &\limplies&& b &= c\\
&&&&\D(\R(a)) &= \R(a)\\
&&&&a \compo \R(a) &= a\\
\label{quasi_range}a \compo b &= a \compo c &\limplies&& \R(a) \compo b &= \R(a) \compo c\\
&&&&\D(a) \compo (a \pref b) &= a\\
&&&&\A(a) \compo (a \pref b) &= \A(a) \compo b
\end{align}
\end{theorem}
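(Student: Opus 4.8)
The statement is a representation theorem in disguise, so the plan is to prove soundness, completeness, and properness separately. \textbf{Soundness} is a routine element-chase: in any algebra of partial functions, associativity, the identity law, and the laws governing $\A$, $\R$, and $\pref$ all hold by unwinding the concrete definitions of the operations, while the two quasiequations---the determinism law (stating that $b = c$ whenever $b$ and $c$ agree after restriction to both $\D(a)$ and $\A(a)$) and~\eqref{quasi_range}---hold precisely because partial functions are single-valued. Hence every representable algebra lies in the class defined by the ten laws.

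The substance is \textbf{completeness}: given an abstract $\algebra A$ satisfying all ten laws, I would build a base set $X$ and an injective homomorphism $\theta$ from $\algebra A$ into the algebra of partial functions on $X$. The antidomain axioms force the domain elements $\D(a) = \A(\A(a))$ to form a Boolean algebra $B$, so a natural first choice of base is the set $\Uf(B)$ of its ultrafilters, with each element $a$ acting as the partial function that pushes an ultrafilter $U$ forward along $a$ to the ultrafilter determined by $\{\D(c) \mid \D(a \compo c) \in U\}$. Single-valuedness of $\theta(a)$---that this pushforward is genuinely a point and not a set of candidates---is exactly where the determinism quasiequation does its work, and~\eqref{quasi_range} is what ensures $\R$ is sent to the partial identity on the image. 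To obtain faithfulness one typically needs a richer supply of points than $\Uf(B)$ alone provides, so I would refine $X$ to suitable prime filters of the whole algebra and use a Zorn's-lemma filter-extension argument to separate any two distinct elements; preservation of each of $\compo$, $\A$, $\R$, and $\pref$ is then verified operation by operation against the definitions.

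Finally, \textbf{properness}---that the class is a quasivariety but not a variety---would be witnessed by a finite algebra satisfying every purely equational axiom while failing a quasiequation, hence not representable; equivalently, a homomorphic image of a representable algebra that is not itself representable. The \emph{main obstacle} is the completeness construction, and specifically the tension between its two requirements: functionality of each $\theta(a)$ wants few, tightly determined points, whereas faithfulness of $\theta$ wants many points to separate elements. Reconciling these is exactly what the determinism quasiequation and~\eqref{quasi_range} make possible, which is also the structural reason the class is a proper quasivariety rather than a variety.
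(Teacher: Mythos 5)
The first thing to say is that the paper does not prove this theorem at all: it is imported wholesale from Hirsch, Jackson, and Mikul\'as \cite{hirsch}, and the bracketed citation is the entire ``proof''. So the honest comparison is with the argument in that reference, and there your outline reproduces the standard strategy, which is also the one used in the cited work: soundness by unwinding the concrete operations, completeness by a filter-based representation, properness by a failure of closure under quotients. Your instinct that ultrafilters of the domain algebra $\D[\algebra A]$ cannot serve as the base is correct and is the key structural point. A concrete witness: on the base $\{1,2\}$, the three-element algebra consisting of $\emptyset$, the identity, and the swap is closed under all four operations, its domain algebra $\{0,\id\}$ has a single ultrafilter, and the identity and the swap push that ultrafilter forward identically; so the map you first propose is not injective even on this representable algebra. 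This is exactly why the cited proof takes as its base the prime filters of the whole algebra (there called \emph{ultrasubsets}), with $a$ acting by $P \mapsto (P \compo a)^\uparrow$ when that filter is proper---the same objects the present paper later uses as the arrows of the dual category. Your refined plan therefore matches the real proof; what your sketch defers, and correctly locates as the hard part, is the verification that this action is well defined and functional, that $\R$ is represented as the identity on the image (this is where quasiequation \eqref{quasi_range} is genuinely needed), and that distinct elements are separated. (A minor inaccuracy: the determinism quasiequation does not hold ``because partial functions are single-valued''; it holds for arbitrary binary relations once $\A$ is read as the antidomain diagonal. Only \eqref{quasi_range} truly exploits functionality.)

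There is one genuine soft spot, in the properness part. A finite algebra satisfying the displayed equations while failing one of the displayed quasiequations does \emph{not} show that the representable algebras form a proper quasivariety: it only shows that the quasiequations are not redundant relative to those particular equations, and leaves open the possibility that the class is axiomatised by some other set of equations. Your two formulations are not equivalent, despite the word ``equivalently''. What is actually needed is the second one: a representable algebra with a homomorphic image that is not representable, since a quasivariety is a variety if and only if it is closed under homomorphic images. Your write-up should commit to that formulation and discard the first.
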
 

The category of representable $\{\compo, \A, \R, \pref\}$-algebras and their homomorphisms is the first of the two categories between which we will exhibit a duality. 

We now introduce a small running example by starting with an eight-element $\{\compo, \A, \R, \pref\}$-algebra. Though finite algebras cannot inform us greatly about the topological aspect of our duality---their duals all have discrete topologies---the example will be sufficient to grasp the essence of the duality. 

\begin{example}\label{small}
Let $\algebra A$ be the following collection of partial functions on the set $\{1, 2, 3\}$. The empty function, $\emptyset$, the identity $\operatorname{id}_{\{1,2\}}$ on $\{1, 2\}$, the identity $\operatorname{id}_{\{3\}}$ on $\{3\}$, the identity $\operatorname{id}_{\{1,2,3\}}$ on $\{1,2,3\}$, the `swap' $s \coloneqq \{1 \mapsto 2,\ 2\mapsto 1\}$, the function $\{1 \mapsto 2,\ 2\mapsto 1,\ 3\mapsto3\}$, the constant function $c \coloneqq \{1 \mapsto 3,\ 2\mapsto 3\}$, and the constant function $\{1 \mapsto 3,\ 2\mapsto 3,\ 3\mapsto3\}$. Then one can check that $\algebra A$ is closed under the operations of composition, antidomain, range, and preferential union and is therefore a $\{\compo, \A, \R, \pref\}$-algebra of partial functions.
\end{example}

Later we will take a particular interest in homomorphisms that are what we call \emph{locally proper}, though they are not essential to our duality. To define locally proper homomorphisms, we first need the notion of a prime filter in a representable $\{\compo, \A, \R, \pref\}$-algebra.

\begin{definition}
A homomorphism of representable $\{\compo, \A, \R, \pref\}$-algebras is \defn{locally proper} if the inverse image of every prime filter (see \Cref{basic_filter}) is a prime filter.
\end{definition}

This is the condition restricting the morphisms in Lawson's inverse semigroup duality \cite{Lawson_2010}. It is evident, even before reading the definition of a prime filter, that locally proper homomorphisms are closed under composition and include all identity maps.

\section{Stone \'etale categories}\label{category_definitions}

In this section we describe the other (large) category participating in our duality. It is a category of small categories with extra structure.\footnote{Unlike algebras, categories are allowed to be empty.} To reduce the potential for confusion, we will call the `object level' morphisms of the small categories \emph{arrows} (which underlines their abstract nature) and reserve \emph{morphism} for the `meta level' morphisms of the large category. Composition in the small categories is denoted ${\cdot}$ and like the $\compo$ notation, the first composee appears on the left-hand side.

\begin{definition}
A \defn{topological category}\footnote{Not to be confused with the various other (unrelated) usages of this term.} is a (small) category whose sets of objects $O$ and arrows $M$ are both topological spaces and such that 
\begin{itemize}
\item
the source map $\dd \from M \to O$ is continuous,
\item
the target map $\rr \from M \to O$ is continuous,
\item
the composition map ${\;\cdot\;} \from M \times_O M \to M$ is continuous,\footnote{The topology on the pullback $M \times_O M$ is the initial topology with respect to the two projections. That is, the topology generated by sets of the form $\{(x, y) \in M \times_O M \mid x \in U\}$ and $\{(x, y) \in M \times_O M \mid y \in U\}$ for open subsets $U$ of $M$. In other words, it is the subspace topology on $M \times_O M \subseteq M \times M$.}
\item
the identity-assigning map $x \mapsto 1_x$ sending each object to its identity arrow is continuous.
\end{itemize}
\end{definition}

Put concisely, for us a topological category is a category internal to the category $\normalfont\textbf{Top}$ of topological spaces. (See \cite[\S XII.1]{maclane:98} for the definition of internal categories.) Note that (from the `arrows only' viewpoint, \cite[p.~9]{maclane:98}) a topological category is a particular type of \emph{topological partial algebra}---a partial algebra on a topological space whose (possibly) partial operations are continuous when considered as functions on their domains of definition (equipped with the subspace topology).

\begin{definition}
A \defn{local homeomorphism} $\pi \from X \to Y$ of topological spaces is a continuous map such that for every $x \in X$ there exists an open neighbourhood $U$ of $x$ such that 
\begin{itemize}
\item
$\pi(U)$ is open,

\item
$\pi|_U \from U \to \pi(U)$ is a homeomorphism.
\end{itemize}
\end{definition}

\begin{definition}
An \defn{\'etale category} is a topological category such that 
\begin{itemize}
\item
the source map $\dd \from M \to O$ is a local homeomorphism,

\item
the target map $\rr \from M \to O$ is an open map.
\end{itemize}
An \'etale category is \defn{Stone} if its space of objects is a Stone space (also known as a Boolean space), that is, a compact and totally separated space.
\end{definition}

The condition that $\dd$ is a local homeomorphism and the condition, coming from the definition of a category, that $\dd$ is surjective, together say that in an \'etale category, $\dd$ gives $M$ the structure of an \emph{\'etale space} (of sets) over $O$ (also known as a \emph{sheaf space}).

One might expect to take functors given by continuous maps as the morphisms of topological categories. However we require a more general definition in order to capture all the duals of algebra homomorphisms.

\begin{definition}\label{multi-functor}
Let $\topo C$ and $\topo D$ be categories. A \defn{multivalued functor} $F \from \topo C \to \topo D$ consists of:
\begin{itemize}
\item
a function from the objects of $\topo C$ to the objects of $\topo D$, 
\item
a relation from the arrows of $\topo C$ to the arrows of $\topo D$,
\end{itemize}
(both denoted $F$) such that:
\begin{enumerate}
\item\label{one}
if $f \from x \to y$ is an arrow of $\topo C$ and $g \in F(f)$, then $g \from F(x) \to F(y)$,
\item
$1_{F(x)} \in F(1_x)$ for each object $x$ of $\topo C$,
\item\label{three}
if $f_1 \cdot f_2$ is defined, $g_1 \in F(f_1)$, and $g_2 \in F(f_2)$, for arrows $f_1$ and $f_2$, then  $g_1 \cdot g_2 \in F(f_1 \cdot f_2)$.
\end{enumerate}
For $F$ to be a multivalued functor between \emph{topological} categories we also require that:
\begin{enumerate}[(i)]
\item
the object component of $F$ is continuous,
\item
the arrow component of $F$ is a continuous relation from the arrows of $\topo C$ to the arrows of $\topo D$ (that is, inverse images of open sets are open).
\end{enumerate}
\end{definition}

Note that we are using `multivalued' in the sense `zero or more values'.
We now pick out certain special multivalued functors to account for the structure of the algebras for whose homomorphisms they are to provide duals.

\begin{definition}
A multivalued functor $F \from \topo C \to \topo D$ between categories is \defn{star injective} if for every object $x$ of $\topo C$, it restricts to an injective relation on the `star' $\operatorname{Hom}(x, -)$. That is:
\[F(f_1 \from x \to y_1) \cap F(f_2 \from x \to y_2) \neq \emptyset \implies f_1 = f_2.\] The same functor is \defn{star surjective} if its restrictions to stars are surjective relations.

We call a multivalued functor $F \from \topo C \to \topo D$ between topological categories \defn{pseudo star surjective} if whenever $U$ is an open set of arrows of $\topo D$, and $U \cap \operatorname{Hom}(F(x), -)$ is nonempty, then there exists some $f' \in U$ 
that is in the image of $F|_{\operatorname{Hom}(x, -)}$. The same functor is \defn{co-pseudo star surjective} if $F^{\operatorname{op}}$ is pseudo star surjective.

We call a multivalued functor between topological categories \defn{star coherent} if it is star injective, star surjective and co-pseudo star surjective.
\end{definition}

Composing two multivalued functors in the evident way yields a multivalued functor. The identity functor provides a two-sided identity for this composition.
 
\begin{lemma3}
Composition of multivalued functors between topological categories preserves star coherency.
\end{lemma3}

\begin{proof}
We give the details showing co-pseudo star surjectivity is preserved. Let $F \from \topo C \to \topo D$ and $G \from \topo D \to \topo E$ be co-pseudo star surjective multivalued functors. Suppose $U$ is open in $\topo E$, and $f \in U \cap \operatorname{Hom}(-, G(F(y))$. Then by co-pseudo star surjectivity of $G$, we can find some $g' \in G^{-1}(U) \cap \operatorname{Hom}(-, F(y))$. As $G$ is continuous, $G^{-1}(U)$ is open. Hence by co-pseudo star surjectivity of $F$, we can find an $h'' \in F^{-1}(G^{-1}(U)) \cap \operatorname{Hom}(-, y)$. That is, there is some $f'' \in U$ in the image of $(G \circ F)|_{\operatorname{Hom}(-, y)}$. Hence $G \circ F$ is co-pseudo star surjective.
\end{proof}

We see therefore that star-coherent multivalued functors can legitimately be used as morphisms of topological categories.

We are now finally ready to state our duality theorem.

\begin{theorem}\label{main-theorem}
There is a categorical duality between the following two categories.
\begin{itemize}
\item
The category $\mathscr A$ with 
\begin{description}
\item[objects] the $\{\compo, \A, \R, \pref\}$-algebras representable by partial functions,

\item[morphisms] the homomorphisms of $\{\compo, \A, \R, \pref\}$-algebras.
\end{description}

\item
The category $\mathscr C$ with
\begin{description}
\item[objects] the Stone \'etale categories all of whose arrows are epimorphisms,

\item[morphisms] the star-coherent multivalued functors of topological categories.
\end{description}
\end{itemize}
\end{theorem}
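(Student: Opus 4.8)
The statement asserts a contravariant equivalence, so the plan is to construct functors $\Phi \from \mathscr A \to \mathscr C$ and $\Psi \from \mathscr C \to \mathscr A$ and then exhibit natural isomorphisms $\Psi \circ \Phi \cong \mathrm{Id}_{\mathscr A}$ and $\Phi \circ \Psi \cong \mathrm{Id}_{\mathscr C}$. The skeleton follows the Stone-duality template, adapted to the germ/bisection picture of \cite{Lawson_2010}.

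For $\Phi$, given a representable algebra $\algebra A$ I would build its dual \'etale category as follows. The subidentities $\{\D(a) \mid a \in \algebra A\}$ form a Boolean algebra---meet is $\compo$, complement is $\A$, top is $\id$---so I take the object space $O$ to be its Stone space of prime filters, which is Stone by construction. The arrows $M$ are the prime filters of $\algebra A$ itself (\Cref{basic_filter}), topologised by the basic compact-open sets $\hat a = \{F \mid a \in F\}$; single-valuedness of the represented functions is exactly what forces $\dd$, sending $F$ to the object-level filter generated by $\{\D(a) \mid a \in F\}$, to restrict to a homeomorphism $\hat a \to \widehat{\D(a)}$, giving the local-homeomorphism (\'etale) property. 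The target map is generated by $\R$, composition of arrows by $\compo$, and the range operation is what forces $\rr$ to be open and every arrow to be an epimorphism. A homomorphism $h \from \algebra A \to \algebra B$ is sent to the multivalued functor whose arrow relation is preimage under $h$.

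For $\Psi$, given a Stone \'etale category $\topo C$ I take $\Psi(\topo C)$ to be the algebra of its compact-open bisections: the compact-open $U \subseteq M$ on which $\dd$ restricts to a homeomorphism onto a compact-open subset of $O$, each viewed as a partial function on the base $O$ via $x \mapsto \rr(u)$ for the unique $u \in U$ over $x$. The four operations are computed setwise---$\compo$ from the category's composition, $\A$ and $\R$ from object-level complement and image, and $\pref$ by override---and the \'etale hypothesis guarantees that such bisections cover $M$, so this is a genuine representation. A star-coherent multivalued functor is dualised to the homomorphism that pulls bisections back along its arrow relation.

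The bulk of the work is the two representation theorems underlying the natural isomorphisms. For $\Psi \circ \Phi \cong \mathrm{Id}_{\mathscr A}$ I must show that $a \mapsto \hat a$ is an isomorphism onto the compact-open bisections of the dual---a Stone-type representation recovering each element as the set of prime filters containing it and computing the four operations correctly---which is where the axioms of \Cref{axiomatisation}, the identification of joins with unions (\Cref{preserves}), and a prime-filter extension lemma are deployed. For $\Phi \circ \Psi \cong \mathrm{Id}_{\mathscr C}$ I must reconstruct the category from its bisection algebra, showing that every arrow is the germ of a compact-open bisection and that the Stone, \'etale, and epimorphism conditions are precisely what make this reconstruction faithful. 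The single hardest step, however, is matching the morphisms: proving that $h \mapsto \Phi(h)$ lands in the star-coherent functors and is a bijection onto them. The multivaluedness and the three star conditions are engineered to capture \emph{arbitrary} homomorphisms rather than the locally proper ones of \cite{Lawson_2010}, so checking that each condition is automatically enjoyed by a preimage relation---and conversely that every star-coherent functor so arises---is the delicate heart of the argument, and is where the generalisation beyond the injective/inverse-semigroup setting must be earned.
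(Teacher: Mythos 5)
Your overall skeleton---prime filters of $\algebra A$ as arrows over the Stone space of the Boolean algebra of domain elements, compact-open bisections (equivalently, sections of $\dd$ on clopens) as the dual algebra, inverse image on both sides for morphisms, and the two Stone-type comparison maps---is the paper's construction. But there is a genuine gap in your definition of $\Psi$: you justify that the bisection algebra lies in $\mathscr A$ by viewing each bisection $U$ as the partial function $x \mapsto \rr(u)$ on $O$ (where $u$ is the unique element of $U$ over $x$) and declaring this ``a genuine representation.'' The assignment $U \mapsto f_U$ is indeed a homomorphism onto an algebra of partial functions on $O$, but it is \emph{not injective}, hence not a representation: it conflates any two bisections whose arrows have pairwise equal sources and targets. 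The paper's own running example exhibits the failure: in the dual category of \Cref{small_dual} there are two distinct arrows from the object `$1,2$' to itself (the prime filters generated by $\operatorname{id}_{\{1,2\}}$ and by the swap $s$), and the two corresponding singleton bisections induce the same partial function on the two-point object space. Since the representable algebras form a quasivariety, admitting a surjective homomorphism onto an algebra of partial functions proves nothing about representability, so this step collapses. Note moreover that representability of the bisection algebra genuinely requires the hypothesis that all arrows are epimorphisms---it is used exactly to verify the quasiequation $a \compo b = a \compo c \implies \R(a)\compo b = \R(a) \compo c$, which fails for categories with non-epi arrows---whereas your evaluation argument never invokes that hypothesis, a further sign that it cannot be correct. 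The paper's route here is different: it constructs no concrete representation at all, but instead verifies, axiom by axiom, the finite quasi-equational axiomatisation of Hirsch--Jackson--Mikul\'as (\Cref{axiomatisation}); this is \Cref{validates}, and it is there that the epimorphism hypothesis does its work.

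Two further points where your sketch glosses over what the paper must actually prove. First, the arrow component of $\pf(h)$ is not literally ``preimage under $h$'': for a general homomorphism, $h^{-1}(P)$ is not a prime filter (that it is one is precisely the locally proper case of \Cref{restricted}); the needed fact is that $h^{-1}(P)$ is \emph{partitioned} into prime filters (\Cref{partition}), and the relation sends $P$ to the blocks of that partition. Second, the ``bijection onto the star-coherent functors'' that you single out as the hardest step is never proved directly in the paper; full faithfulness follows formally once both double-dual comparison maps, $a \mapsto a^\theta$ and $c \mapsto c^\varphi$, are shown to be natural isomorphisms, which is how the paper organises the argument.
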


We will also show that the duality restricts to the following sub-duality.

\begin{theorem}\label{restricted}
There is a categorical duality between the following two categories.
\begin{itemize}
\item
The category $\mathscr A'$ with 
\begin{description}
\item[objects] the $\{\compo, \A, \R, \pref\}$-algebras representable by partial functions,

\item[morphisms] the locally proper homomorphisms of $\{\compo, \A, \R, \pref\}$-algebras.
\end{description}

\item
The category $\mathscr C'$ with
\begin{description}
\item[objects] the Stone \'etale categories all of whose arrows are epimorphisms,

\item[morphisms] the star-coherent functors of topological categories.
\end{description}
\end{itemize}
\end{theorem}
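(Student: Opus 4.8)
The plan is to bootstrap off the duality already established in \Cref{main-theorem}, rather than build a new one from scratch. Observe that $\mathscr A'$ and $\mathscr C'$ have exactly the same objects as $\mathscr A$ and $\mathscr C$, respectively, and differ only in their morphisms. Writing $\Phi \from \mathscr A \to \mathscr C$ and $\Psi \from \mathscr C \to \mathscr A$ for the contravariant functors of \Cref{main-theorem}, the equivalence already supplies a bijection between $\operatorname{Hom}_{\mathscr A}(\algebra A, \algebra B)$ and $\operatorname{Hom}_{\mathscr C}(\Phi\algebra B, \Phi\algebra A)$ for every pair of algebras. Thus the entire theorem reduces to showing that this bijection identifies the locally proper homomorphisms with precisely the \emph{single-valued} star-coherent functors (i.e.\ those whose arrow-relation is a genuine, everywhere-defined function).

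The central lemma is therefore: a homomorphism $h \from \algebra A \to \algebra B$ is locally proper if and only if $\Phi(h)$ is an ordinary star-coherent functor of topological categories. Recall from the construction in \Cref{algebra_to_category} that the arrows of $\Phi(\algebra B)$ are the prime filters of $\algebra B$, and that on arrows $\Phi(h)$ relates a prime filter $P$ of $\algebra B$ to the prime filters of $\algebra A$ determined by the inverse image $h^{-1}(P)$. If $h$ is locally proper, then $h^{-1}(P)$ is itself a prime filter for every $P$, so $\Phi(h)$ assigns to each arrow $P$ the single arrow $h^{-1}(P)$; the arrow-relation is then total and single-valued, hence a functor. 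Conversely, if the arrow-relation of $\Phi(h)$ is total and single-valued, then exactly one prime filter of $\algebra A$ is associated to each $h^{-1}(P)$, which via the prime-filter correspondence forces $h^{-1}(P)$ to be prime, so $h$ is locally proper. One should also confirm the easy compatibility that a total single-valued multivalued functor in the sense of \Cref{multi-functor} really is an ordinary functor: single-valuedness turns conditions \eqref{one}--\eqref{three} into $\Phi(h)(1_x) = 1_{\Phi(h)(x)}$ and $\Phi(h)(f_1 \cdot f_2) = \Phi(h)(f_1) \cdot \Phi(h)(f_2)$, and the topological condition reduces to continuity of the (now single-valued) arrow map.

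Granting the lemma, the rest is formal. The single-valued star-coherent functors are closed under composition and contain all identities, since the composite of total single-valued relations is again total and single-valued and the identity functor is single-valued; dually the locally proper homomorphisms form a subcategory, as already noted in \Cref{preliminaries}. Hence $\mathscr A'$ and $\mathscr C'$ are genuine (wide) subcategories of $\mathscr A$ and $\mathscr C$. By the lemma, $\Phi$ and $\Psi$ restrict to mutually inverse bijections on the relevant hom-sets, so the restricted functors are full and faithful; essential surjectivity is inherited since the objects are unchanged. Finally, the unit and counit of the equivalence in \Cref{main-theorem} are natural isomorphisms, and their components—being isomorphisms of algebras, respectively invertible functors—are in particular locally proper, respectively single-valued, so they restrict to natural isomorphisms between the restricted composites. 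This yields the claimed duality between $\mathscr A'$ and $\mathscr C'$.

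The main obstacle I anticipate is the backward direction of the central lemma: extracting local properness of $h$ from single-valuedness of $\Phi(h)$. The delicate point is that single-valuedness must be read as a \emph{total} function on arrows, not merely an at-most-single-valued relation, and one must translate this totality and uniqueness back through the prime-filter description of \Cref{algebra_to_category} to conclude that $h^{-1}(P)$ is a prime filter (and not a non-prime filter that happens to be refined by a unique prime filter). Verifying that these two possibilities cannot be confused—so that uniqueness of the associated prime filter genuinely entails primality of $h^{-1}(P)$—is where the explicit structure of the dual category, in particular the fact that all its arrows are epimorphisms, is expected to do the real work.
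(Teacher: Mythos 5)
Your overall strategy coincides with the paper's: \Cref{restricted} is obtained there too by restricting the duality of \Cref{main-theorem}, and your forward direction (if $h$ is locally proper then $\pf(h)$ is a genuine functor) is exactly the paper's \Cref{proper0}. The genuine gap is the backward direction of your central lemma, on which your whole transfer argument for the $\mathscr C'$-side rests: you leave it unproven, and the ingredient you predict will close it---the fact that all arrows of the dual category are epimorphisms---is not the relevant one. (That property is used for something else entirely: it is what makes $\cpt(\topo C)$ validate quasiequation \eqref{quasi_range}, as in \Cref{validates}.) What actually dispels your worry about ``a non-prime filter that happens to be refined by a unique prime filter'' is \Cref{partition}: for any homomorphism $h \from \algebra A \to \algebra B$ and any prime filter $P$ of $\algebra B$, the set $h^{-1}(P)$ is \emph{partitioned} into prime filters of $\algebra A$. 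Every block of that partition is a prime filter contained in $h^{-1}(P)$, hence lies in $\pf(h)(P)$. So single-valuedness of $\pf(h)$ at $P$ gives at most one block; totality gives some prime filter inside $h^{-1}(P)$, hence $h^{-1}(P) \neq \emptyset$ and at least one block; and with exactly one block, $h^{-1}(P)$ equals that block and is therefore itself prime, i.e.\ $h$ is locally proper. That short argument---not epimorphisms---closes your gap.

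For comparison, the paper never proves this converse at all. Instead of a single ``iff'' lemma about $\pf$, it proves one lemma per functor: \Cref{proper0} as above, and \Cref{proper}, which shows directly that $\cpt(F)$ is locally proper whenever $F$ is a functor, using the fact (from the surjectivity of $\varphi$) that every prime filter of $\cpt(\topo C)$ has the form $c^\varphi$, so that $\cpt(F)^{-1}(c^\varphi) = F(c)^\varphi$; it then observes, as you do, that the components of $\theta$ and $\varphi$ are respectively locally proper and functors, so the natural isomorphisms restrict. Your route, once repaired via \Cref{partition}, is equally valid and slightly more economical in that it analyses only $\pf$; the price is the conjugation argument $\pf(\cpt(F)) = \varphi_{\topo D} \circ F \circ \varphi_{\topo C}^{-1}$, needed to conclude that $\cpt$ carries functors to locally proper homomorphisms, which your ``the rest is formal'' paragraph gestures at but should state explicitly.
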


\section{From algebras to topological categories}\label{algebra_to_category}

In this section, we will define a contravariant functor in the direction from algebras to topological categories that forms one half of our duality. Following \cite{Lawson_2010}, we present the functor in terms of certain filters. That is, given an algebra, the entities used to construct a topological category---the entities that will constitute the arrows---will be filters satisfying a primality condition. We mention, however, that an alternative presentation is possible using the sort of algebraic distillation of \emph{germs of functions} found in \cite{Bauer_2013}, for example.

We start with some easily verifiable remarks. In a representable $\{\compo, \A, \R, \pref\}$-algebra $\algebra A$, the elements of the form $\A(-)$ form a subalgebra. We call an element of this subalgebra a \defn{domain element} (since it is equivalently of the form $\D(-)$). This sub\-algebra, $\D[\algebra A]$, is a Boolean algebra, with least element $0$, greatest element $\id$, meet given by $\compo$, complement given by $\A$, and join given either by De Morgan or by $\pref$.

In all the following lemmas, we will be working exclusively with isomorphism-invariant properties of representable $\{\compo, \A, \R, \pref\}$-algebras. Hence, whenever convenient, we may assume we are working genuinely with partial functions and make free use of any property of partial functions that is both intuitively obvious and easily verifiable from definitions. 

\begin{definition4}\label{basic_filter}
Let $\algebra A$ be a representable $\{\compo, \A, \R, \pref\}$-algebra. A \defn{filter} of $\algebra A$ is a nonempty, upward-closed, downward-directed subset of $\algebra A$. A filter $F$ is \defn{prime} if it is proper and whenever $a \pref b \in F$, either $a \in F$ or $b \in F$.\footnote{This is equivalent to the condition that whenever $a \bjoin b \in F$, either $a \in F$ or $b \in F$.}
\end{definition4}

Our definition of filters being the standard one, many basic facts are already known to us. For example there is a smallest filter including any given subset; that is, the notion of the filter \emph{generated} by a subset is well defined. Many of the properties of prime filters that we need have been proven in \cite[Section~4]{hirsch} (where a prime filter is called an \emph{ultrasubset}). The proofs there apply to any representable $\{\compo, \A, \R\}$-algebra, so in particular to the representable $\{\compo, \A, \R, \pref\}$-algebras. 

\begin{lemma4}\label{maximal}
In representable $\{\compo, \A, \R, \pref\}$-algebras, the prime and maximal filters coincide.\footnote{As is conventional, `maximal filter' will always mean maximal \emph{proper} filter.}
\end{lemma4}

\begin{proof}
Take first a prime filter $P$. Let $a$ be an arbitrary element not in $P$. Since $P$ is nonempty, we can find $p \in P$. The filter generated by $P \cup \{a\}$ must contain a lower bound for $\{a, p\}$, call this $b$. Since $b \leq p$ it follows, by reasoning about partial functions, that $p = b \bjoin (\A(b) \compo p)$. Hence either $b \in P$ or $\A(b) \compo p \in P$. But $b$ is not in $P$, else $a$ would be in $P$. Hence $\A(b) \compo p \in P$, and so the filter generated by $P \cup \{a\}$ contains both $b$ and $\A(b) \compo p$ and hence some lower bound for this pair---necessarily $0$. Hence any extension of $P$ is improper, so $P$ is maximal.

For the converse, we first establish:
\begin{equation*}\label{fact}\text{for any upward-closed set $S$, the set $\D[S]$ is upward closed in $\D[\algebra A]$}.\end{equation*}
 If $a \in S$ and $\D(b) \geq \D(a)$, then $a \pref b \geq a$, so $a \pref b \in S$. It is a property of partial functions that $\D(b) \geq \D(a) \implies \D(a \pref b) = \D(b)$, hence $\D[S]$ contains $\D(b)$. We conclude that $\D[S]$ is upward closed.

 Now take a maximal filter $M$, and suppose $a \pref b \in M$. By \cite[Lemma~4.5(ii)]{hirsch} and the fact that $\D[M]$ is upward closed in $\D[\algebra A]$, the set $\D[M]$ is an ultrafilter of $\D[\algebra A]$. Then either $\D(a) \in \D[M]$ or $\A(a) \in \D[M]$. Suppose the former, that is, there is some $c \in M$ with $\D(c) = \D(a)$. As $M$ is downward directed, there is some $d \in M$ with $d \leq a\pref b, c$. It follows, by reasoning about partial functions, that $d \leq a$, and hence $a \in M$. By a similar argument, if $\A(a) \in \D[M]$ then $b \in M$. Hence $M$ is prime.
\end{proof}

By \cite[Lemma~4.5(ii)]{hirsch} we now know that the following conditions are equivalent.
\begin{enumerate}
\item
$P$ is a prime filter.

\item
$P$ is a maximal filter.

\item
$P = (\mu \compo a)^\uparrow$ for some $a \in \algebra A$ and ultrafilter $\mu$ of $\D[\algebra A]$ such that $0 \not\in \mu \compo a$.

\item
For some ultrafilter $\mu$ of $\D[\algebra A]$, for all $a \in P$, we have $P = (\mu \compo a)^\uparrow$.
\end{enumerate}

In the following lemmas, let $\algebra A$ be a representable $\{\compo, \A, \R, \pref\}$-algebra. The notation $S^\uparrow$ denotes the upward closure of the set $S$ in $\algebra A$ or in $\D[\algebra A]$ if specified.

\begin{lemma4}\label{down}
Let $P$ be a prime filter of $\algebra A$. Then $\D[P]$ and $\R[P]^\uparrow$ are both ultrafilters of $\D[\algebra A]$, where the upward closure is taken in $\D[\algebra A]$.
\end{lemma4}

\begin{proof}
Since $P$ is nonempty, $\D[P]$ is too. As noted in the proof of \Cref{maximal}, the fact that $P$ is upward closed implies $\D[P]$ is too. Now suppose $\D(a), \D(b) \in \D[P]$, with $a, b \in P$. Then as $P$ is downward directed, it contains some $c \leq a, b$. This inequality implies (for any partial functions) that $\D(c) \leq \D(a), \D(b)$. Hence $\D[P]$ is downward directed. If $\D[P]$ contained $0$, then $P$ would have to too, since $\D(a) = 0$ implies $a = 0$ for partial functions. So $\D[P]$ must be a \emph{proper} filter. It remains to show that for any $a \in \algebra A$, either $\D(a)$ or $\A(a)$ belongs to $\D[P]$. Take any element $b$ of $P$. Then $b = (\D(a) \compo b) \pref (\A(a) \compo b)$, so either $\D(a) \compo b \in P$ or $\A(a) \compo b \in P$. Then we can obtain an element of $\D[P]$ that is less than or equal to either $\D(a)$ or $\A(a)$, respectively.

As $P$ is nonempty, $\R[P]^\uparrow$ is too. It is upward closed by definition. Suppose $\R(a), \R(b) \in \R[P]$, with $a, b \in P$. Then as $P$ is downward directed, it contains some $c \leq a, b$. The inequality implies  that $\R(c) \leq \R(a), \R(b)$. Hence $\R[P]$ is downward directed, so $\R[P]^\uparrow$ is too. Since  $\R[P]$ cannot contain $0$, neither can $\R[P]^\uparrow$, so it is proper. Given any $a \in \algebra A$, take any $b \in P$. Then $b =  (b \compo \D(a)) \pref (b \compo \A(a))$, so either $b \compo \D(a) \in P$ or $b \compo \A(a) \in P$. But $\R(b \compo \D(a)) \leq \D(a)$ and $\R(b \compo \A(a)) \leq \A(a)$. So either $\D(a) \in \R[P]^\uparrow$ or $\A(a) \in \R[P]^\uparrow$, respectively.
\end{proof}

\begin{lemma4}\label{up}
Let $\mu$ be an ultrafilter of $\D[\algebra A]$. Then $\mu^\uparrow$, where the upward closure is taken in $\algebra A$, is a prime filter of $\algebra A$.
\end{lemma4}

\begin{proof}
We have $\mu^\uparrow = (\mu \compo \id)^\uparrow$, and by \cite[Lemma~4.5(ii)]{hirsch} this is a maximal filter. By \Cref{maximal}, it is a prime filter.
\end{proof}

\begin{lemma4}\label{g}
Let $\mu$ be an ultrafilter of $\D[\algebra A]$. Then $\D[\mu^\uparrow] = \mu$, where the upward closure is taken in $\algebra A$.
\end{lemma4}

\begin{proof}
We know $\mu \subseteq \D[\mu^\uparrow]$ because $\D$ fixes all domain elements. Conversely, take an element of $\D[\mu^\uparrow]$: an element $\D(b)$ such that $b \geq \alpha$ for some $\alpha \in \mu$. Then $\D(b) \geq \D(\alpha)$. But $\D(\alpha) = \alpha$ and $\mu$ is upward closed; hence $\D(b) \in \mu$. This proves the reverse inclusion.
\end{proof}

\begin{lemma4}\label{composition}
Let $P, Q$ be prime filters of $\algebra A$. Then $(P \compo Q)^\uparrow$ is a prime filter if and only if $\D[Q] =\R[P]^\uparrow$. Otherwise $(P \compo Q)^\uparrow$ contains $0$ and hence is all of $\algebra A$.
\end{lemma4}



\begin{proof}
Let $P = (\mu \compo a)^\uparrow$ and $Q = (\nu \compo b)^\uparrow$ for $\mu, \nu$ ultrafilters of domain elements. By \cite[Lemma~4.6(i)]{hirsch}, the set $(P \compo Q)^\uparrow$ is of the form $(\mu \compo a \compo \nu \compo b)^\uparrow$. By \cite[Lemma~4.6(ii)]{hirsch}, the set $(\mu \compo a \compo \nu \compo b)^\uparrow$ is a prime filter if and only if it does not contain $0$. So it remains to show that $0 \in (P \compo Q)^\uparrow$ if and only if $\D[Q] \neq\R[P]^\uparrow$. 

First suppose $0 \in (P \compo Q)^\uparrow$, so $0 \in P \compo Q$, and so $0 = a \compo b$ for some $a \in P$ and $b \in Q$. It follows, by reasoning about partial functions, that $\R(a) \compo \D(b) = 0$. Since $\R(a) \in \R[P]^\uparrow$ and $\D(b) \in \D[Q]$, and $\compo$ is meet on domain elements, these two ultrafilters cannot be equal. Conversely, suppose $\D[Q] \neq\R[P]^\uparrow$. Since they are ultrafilters, we can then find Boolean complements $\alpha$ and $\A(\alpha)$ with $\alpha \in \R[P]^\uparrow$ and $\A(\alpha) \in \D[Q]$. That is, there are $a \in P$ with $\R(a) \leq \alpha$ and $b \in Q$ with $\D(b) = \A(\alpha)$. Then $\R(a) \compo \D(b) = 0$, and it follows by reasoning about partial functions that $a \compo b = 0$; hence $0 \in (P \compo Q)^\uparrow$.
\end{proof}

\begin{lemma4}\label{right_restriction}
Let $P, Q$ be prime filters of $\algebra A$. If $(P \compo Q)^\uparrow$ is proper, then $\R[P \compo Q]^\uparrow = \R[Q]^\uparrow$.
\end{lemma4}

\begin{proof}
First we show  $\R[P \compo Q] \subseteq \R[Q]$, giving $\R[P \compo Q]^\uparrow \subseteq \R[Q]^\uparrow$. Take an $\R(a \compo b) \in \R[P \compo Q]$, with $a \in P$ and $b \in Q$. Now $b = (\R(a) \compo b) \pref (\A(\R(a)) \compo b)$, so, since $Q$ is a prime filter, either it contains  $\R(a) \compo b$ or $\A(\R(a)) \compo b$. If the latter, then $P \compo Q$ contains $a \compo \A(\R(a)) \compo b$, equal to $0$, contradicting the hypothesis that $(P \compo Q)^\uparrow$ is proper. Hence $Q$ contains $\R(a) \compo b$, so $\R[Q]$ contains $\R(\R(a) \compo b)$. But $\R(\R(a) \compo b) = \R(a \compo b)$ is a property of partial functions (it  is axiom (r.VII) in \cite{hirsch}). Hence $\R(a \compo b) \in \R[Q]$, and we have our first inclusion.

Conversely, suppose $\R(b) \in \R[Q]$, with $b \in Q$. Take any $a \in P$. As before, it must be the case that $\R(a) \compo b \in Q$. Then $a \compo (\R(a) \compo b) \in P \compo Q$, that is, $a \compo b \in P \compo Q$. Hence $\R(a \compo b) \in \R[P \compo Q]$. By a property of partial functions, $\R(b) \geq \R(a \compo b)$, hence $\R(b) \in \R[P \compo Q]^\uparrow$. Since $\R(b)$ was an arbitrary element of $\R[Q]$ we have $\R[Q] \subseteq \R[P \compo Q]^\uparrow$ and hence $\R[Q]^\uparrow \subseteq \R[P \compo Q]^\uparrow$.
\end{proof}

\begin{lemma4}\label{intersection}
If $P$ and $Q$ are nondisjoint prime filters with $\D[P] = \D[Q]$, then $P = Q$.
\end{lemma4}

\begin{proof}
This is \cite[Lemma~4.5(iv)]{hirsch}.
\end{proof}


\begin{lemma4}\label{cancellation}
Let $P, Q, R$ be prime filters of $\algebra A$. Suppose $(P \compo Q)^\uparrow$ and $(P \compo R)^\uparrow$ are proper and equal. Then $Q = R$.
\end{lemma4}

\begin{proof}
If $(P \compo Q)^\uparrow$ and $(P \compo R)^\uparrow$ are proper, then by \Cref{composition}, we have $\D[Q] = \R[P]^\uparrow = \D[R]$. So by \Cref{intersection} it is sufficient to show that $Q$ and $R$ are nondisjoint. Let $a \in P$ and $b \in Q$. So $a \compo b \in (P \compo Q)^\uparrow = (P \compo R)^\uparrow$, that is, $a \compo b \geq a' \compo c$ for some $a' \in P$ and $c \in R$. By definition this means $\D(a' \compo c) \compo a \compo b = a' \compo c$. But $\D(a' \compo c)$ must belong to the ultrafilter $\D[P]$---it cannot be that its Boolean complement $\A(a' \compo c)$ is in $\D[P]$, since $(P \compo R)^\uparrow$ is proper. Hence $\D(a' \compo c) \compo a \in P$. Pick some $a'' \in P$ with $a'' \leq \D(a' \compo c) \compo a$ and $a'' \leq a'$. It is a property of partial functions that if $d \compo b = d' \compo c$ and $d'' \leq d, d'$ then $d'' \compo b = d'' \compo c$. Hence, from $\D(a' \compo c) \compo a \compo b = a' \compo c$ we obtain $a'' \compo b = a'' \compo c$. By axiom \eqref{quasi_range}, this gives $\R(a'') \compo b = \R(a'') \compo c$. Now $\R(a'')$ is an element of $\D[Q] = \R[P]^\uparrow = \D[R]$, so $\R(a'') \compo b \in Q$ and $\R(a'') \compo c \in R$. That is, we have found our element common to $Q$ and $R$.
\end{proof}

\begin{lemma4}\label{filter-range}
Let $\mu$ be an ultrafilter of $\D[\algebra A]$ and $a \in \algebra A$ be such that $\R(a) \in \mu$. Then there exists a prime filter $P$ such that $a \in P$ and $\R[P]^\uparrow = \mu$.
\end{lemma4}

\begin{proof}
Suppose $\R(a) \in \mu$. Consider the subset $\D[a \compo \mu]$ of the Boolean algebra $\D[\algebra A]$. This set $\D[a \compo \mu]$ is nonempty (because $\mu$ is nonempty) and downward directed---because given $\D[a \compo \alpha]$ and $\D[a \compo \beta]$, for $\alpha, \beta \in \mu$, we know $\alpha \compo \beta \in \mu$, and it is a property of partial functions that $\D(a \compo \alpha \compo \beta)$ is a lower bound for $\{\D(a \compo \alpha), \D(a \compo \beta)\}$ (in fact it is the meet). Further, $\D[a \compo \mu]$ does not contain $0$, since that would imply $0 \in a \compo \mu$, which implies that $\A(\R(a)) \in \mu$, but this is prohibited, since $\mu$ is an ultrafilter containing $\R(a)$.

 We have shown that $\D[a \compo \mu]^\uparrow$ is a proper filter of $\D[\algebra A]$. Extend it to an ultrafilter $\nu$. Now $(\nu \compo a)^\uparrow$ is a prime filter because $0 \not\in \nu \compo a$, by the following reasoning. The filter $\D[a \compo \mu]^\uparrow$ contains $\D(a \compo \id) = \D(a)$, and hence $\nu$ does too, meaning $\nu$ does not contain $\A(a)$, which is a necessary condition for $\nu \compo a$ to contain $0$. Our prime filter $(\nu \compo a)^\uparrow$ contains $a$, as desired.

 Finally, we claim that $\R[(\nu \compo a)^\uparrow]^\uparrow = \mu$. For any $\alpha \in \mu$, we know that $\D(a \compo \alpha) \in \nu$ and therefore $\R(\D(a \compo \alpha) \compo a) \in \R[(\nu \compo a)^\uparrow]^\uparrow$. But it is a property of partial functions that $\R(\D(a \compo \alpha) \compo a) \leq \alpha$ (given that $\alpha$ is a domain element). Since $\R[(\nu \compo a)^\uparrow]^\uparrow$ is upward closed and $\alpha$ was an arbitrary element of $\mu$, we obtain $\mu \subseteq \R[(\nu \compo a)^\uparrow]^\uparrow$. Since $\R[(\nu \compo a)^\uparrow]^\uparrow$ and $\mu$ are ultrafilters, they are equal, as required.
\end{proof}

\subsection{The functor \texorpdfstring{$\pf$}{PF} on objects}

We now define the functor $\pf \from \mathscr A \to \mathscr C$ used for one half of the duality. (The $\pf$ stands for `prime filter', not `partial function'!) For  $\algebra A$ a representable $\{\compo, \A, \R, \pref\}$-algebra, let $\pf( \algebra A)$ be the following Stone \'etale category.
\begin{itemize}
\item
 The objects of $\pf (\algebra A)$ are the ultrafilters of $\D[\algebra A]$. 
 
\item
The arrows of $\pf (\algebra A)$ are the prime filters of $\algebra A$.

\item
The source and target of an arrow $P$ are $\D[P]$ and $\R[P]^\uparrow$ respectively. By \Cref{down} these are objects. 

\item
The identity arrow for an object $\mu$ is $\mu^\uparrow$, where the upward closure is taken in $\algebra A$. By \Cref{up}, this is an arrow. By \Cref{g}, its source  is $\mu$. By \Cref{composition}, the target of $\mu^\uparrow$ must also be $\mu$ since $(\mu^\uparrow \compo \mu^\uparrow)^\uparrow$ is proper.

\item
For composable arrows $P$ and $Q$, the composition is given by $P \ccompo Q \coloneqq (P \compo Q)^\uparrow$. By \Cref{right_restriction}, this is an arrow, evidently with the same source as $P$. By \Cref{composition}, it has the same target as $Q$. 
\end{itemize}
The confirmation that the structure so defined validates the axioms for categories is the content of \Cref{axioms}, which follows shortly. By \Cref{cancellation}, all arrows are epimorphisms. Let $\operatorname{uf}(\D[\algebra A])$ denote the ultrafilters of $\D[\algebra A]$, and let $\operatorname{pf}(\algebra A)$ denote the prime filters of $\algebra A$.
\begin{itemize}

\item
The topology on the objects is the topology generated by $\{ \widehat \alpha \mid \alpha \in \D[\algebra A]\}$, where $\widehat \alpha \coloneqq \{\mu \in \operatorname{uf}(\D[\algebra A]) \mid \alpha \in \mu\}$. 

\item
The topology on the arrows is the topology generated by $\{ a^\theta \mid a \in \algebra A\}$, where $a^\theta \coloneqq \{P \in \operatorname{pf}(\algebra A) \mid a \in P\}$. 
\end{itemize}
The confirmations that the source, target, composition, and identity-assigning maps are continuous with respect to these topologies is the content of the following \Cref{continuous}. The confirmation that the source map is a local homeomorphism is the following \Cref{source_lemma}, and the confirmation that the target map is an open map is \Cref{target_lemma}. It is immediate that the objects form a Stone space, since we have used for this space exactly the standard construction of the Stone dual of the Boolean algebra $\D[\algebra A]$.

\begin{lemma4}\label{axioms}
Let $\algebra A$ be a representable $\{\compo, \A, \R, \pref\}$-algebra. Then $\pf (\algebra A)$ satisfies the associativity and identity axioms for categories.
\end{lemma4}

\begin{proof}
First the identity laws: let $P$ be an arrow (a prime filter) and let $\mu = \D[P]$ be its source, so $P$ is of the form $(\mu \compo a)^\uparrow$ for some $a$. Then the identity arrow at $\mu$ is $\mu^\uparrow = (\mu \compo \id)^\uparrow$ and contains $\id$. By definition $\mu^\uparrow \ccompo P = (\mu^\uparrow \compo P)^\uparrow$ and so contains ${\id} \compo a = a$. Then since $\D[\mu^\uparrow \ccompo P] = \D[\mu^\uparrow] = \mu = \D[P]$, by \Cref{intersection} we conclude $\mu^\uparrow \ccompo P = P$. Similarly, $P \ccompo \mu^\uparrow$ contains $a$ and has source equal to $\D[P]$. Hence we also have $P = P \ccompo \mu^\uparrow$.

For the associativity law, by similar reasoning, if the compositions $(P \ccompo Q) \ccompo R$ and $P \ccompo (Q \ccompo R)$ are defined, then they are nondisjoint. And since both have source $\D[P]$, by \Cref{intersection} they are equal.
\end{proof}

\begin{lemma4}\label{continuous}
Let $\algebra A$ be a representable $\{\compo, \A, \R, \pref\}$-algebra. The source, target, composition, and identity-assigning maps on the category $\pf(\algebra A)$ are continuous with respect to the topologies generated by $\{ \widehat \alpha \mid \alpha \in \D[\algebra A]\}$ and $\{ a^\theta \mid a \in \algebra A\}$.
\end{lemma4}

\begin{proof}
First $\dd$: we take $\widehat \alpha$ and show that $\dd^{-1}(\widehat \alpha)$ is open. So let $P$ be a prime filter with $\alpha \in \D[P]$. Take any $a \in P$. Then $b \coloneqq \alpha \compo a$ is also in $P$, so $P \in b^\theta$. Now for any $Q \in b^\theta$, we have $\D(b) \in \D[Q] = \dd(Q)$, and $\D(b) \leq \alpha$, so $\alpha \in \dd(Q)$. That is, $Q \in \dd^{-1}(\widehat \alpha)$. So $P \in b^\theta \subseteq \dd^{-1}(\widehat \alpha)$. Since $P$ was an arbitrary element of $\dd^{-1}(\widehat \alpha)$ and $b^\theta$ is by definition open, we are done.

Next $\rr$: we take $\widehat \alpha$ and show that $\rr^{-1}(\widehat \alpha)$ is open. So let $P$ be a prime filter with $\alpha \in \R[P]^\uparrow$. Take any $a \in P$. Then $b \coloneqq a \compo \alpha$ is also in $P$---because $a = (a \compo \alpha) \pref (a \compo \A(\alpha))$, but $a \compo \A(\alpha)$ cannot be in $P$, else $\R[P]^\uparrow$ (which contains $\alpha$) would contain $0$. Hence $P \in b^\theta$. Now for any $Q \in b^\theta$, we have $\R(b) \in \R[Q]^\uparrow = \rr(Q)$, and $\R(b) \leq \alpha$, so $\alpha \in \rr(Q)$. That is, $Q \in \rr^{-1}(\widehat \alpha)$. So $P \in b^\theta \subseteq \rr^{-1}(\widehat \alpha)$. Since $P$ was an arbitrary element of $\rr^{-1}(\widehat \alpha)$ and $b^\theta$ is by definition open, we are done.

For composition: we take $a^\theta$ and show that the inverse image under $\ccompo$ is open. So let $P$ and $Q$ be two prime filters such that $P \cdot Q \in a^\theta$, that is, $a \in (P \compo Q)^\uparrow$. Then there are $b \in P$ and $c \in Q$ such that $b \compo c \leq a$. Now $\D(b \compo c) \compo b$ must also be in $P$, for if $\A(b \compo c) \compo b$ were in $P$ then $P \compo Q$ would contain $\A(b \compo c) \compo b \compo c = 0$. So we have open sets $(\D(b \compo c) \compo b)^\theta$ containing $P$ and $c^\theta$ containing $Q$, and for any two composable arrows $R \in (\D(b \compo c) \compo b)^\theta$ and $S \in c^\theta$, their composition $R\ccompo S = (R \compo S)^\uparrow$ contains $\D(b \compo c) \compo b \compo c = b \compo c \leq a$ and therefore lies in $a^\theta$. Since $P$ and $Q$ were arbitrary subject to $P \ccompo Q \in a^\theta$, this proves that $a^\theta$ is open.

For the identity-assigning map: we take $a^\theta$ and confirm that the set $\{\mu \in \operatorname{uf}(\D[\algebra A]) \mid a \in \mu^\uparrow\}$ is open. Take $\nu$ in this set; then $a \geq \alpha$ for some $\alpha \in \nu$. It follows that the open set $\widehat\alpha$ of objects contains $\nu$ and is included in $\{\mu \in \operatorname{uf}(\D[\algebra A]) \mid a \in \mu^\uparrow\}$, so we are done.
\end{proof}

\begin{lemma4}\label{source_lemma}
The map $\dd \from P \mapsto \D[P]$ is a local homeomorphism from the arrows to the objects of $\pf(\algebra A)$.
\end{lemma4}

\begin{proof}
We know from \Cref{continuous} that $\dd$ is continuous. Next we establish that $\dd$ is an open map by showing that $\dd[a^\theta] = \widehat{\D(a)}$ for any $a \in \algebra A$. Clearly if $a$ belongs to a prime filter $P$ then $\D(a)$ belongs to $\dd(P) = \D[P]$, hence $\dd[a^\theta] \subseteq \widehat{\D(a)}$. Conversely, any $\mu \in \widehat{\D(a)}$ is the image under $\dd$ of the element $(\mu \compo a)^\uparrow$ of $a^\theta$ (for $\D(a) \in \mu$ ensures $0 \not\in \mu \compo a$). Hence $\dd$ is an open map.

Now any open and continuous map $f$ is a local homeomorphism if every point in its domain has an open neighbourhood $U$ such that the restriction of $f$ to $U$ is injective. For $\dd$ we take, for any $P$ in its domain, any $a \in P$ we wish and use the open neighbourhood $a^\theta$ of $P$. The map $\dd$ is injective on $a^\theta$ by \Cref{intersection}.
\end{proof}

\begin{lemma4}\label{target_lemma}
The map $\rr \from P \mapsto \R[P]^\uparrow$ is an open map from the arrows to the objects of $\pf(\algebra A)$.
\end{lemma4}

\begin{proof}
We argue that $\rr[a^\theta] = \widehat{\R(a)}$ for any $a \in \algebra A$. Clearly if $\mu \in \rr[a^\theta]$, that is, $\mu = \R[P]^\uparrow$ for some $P$ containing $a$, then $\mu$ contains $\R(a)$, so $\mu \in \widehat{\R(a)}$. Conversely, suppose $\mu \in \widehat{\R(a)}$, that is, $\R(a) \in \mu$. By \Cref{filter-range}, there exists a prime filter $P$ containing $a$ and such that $\R[P]^\uparrow = \mu$. That is, $P \in a^\theta$ and $\mu = \R[P]^\uparrow = \rr(P) \in \rr[a^\theta]$.
\end{proof}

\begin{example}\label{small_dual}
Let $\algebra A$ be the example from \Cref{small}. The Boolean subalgebra $\D[\algebra A]$ consists of the four elements $\emptyset$, $\operatorname{id}_{\{1,2\}}$, $\operatorname{id}_{\{3\}}$, and $\operatorname{id}_{\{1,2,3\}}$. The ultrafilters of $\D[\algebra A]$ (the objects of the dual) are $\{\operatorname{id}_{\{1,2\}}, \operatorname{id}_{\{1,2,3\}}\}$ and $\{\operatorname{id}_{\{3\}}, \operatorname{id}_{\{1,2,3\}}\}$, which we call `$1,2$' and `$3$' respectively. The prime filters of $\algebra A$ (the arrows of the dual) are the up-sets of the minimal nonzero elements of $\algebra A$, and there are four of these: $\operatorname{id}_{\{1,2\}}$, $\operatorname{id}_{\{3\}}$, $s$, and $c$. Those that correspond to \emph{identity} arrows in the dual are $\operatorname{id}_{\{1,2\}}$ and $\operatorname{id}_{\{3\}}$. We can calculate that both $\operatorname{id}_{\{1,2\}}$ and $s$ have source `$1,2$' and target `$1,2$', that $s \cdot s = \operatorname{id}_{\{1,2\}}$, and so on. A suggestive diagram of the dual $\pf(\algebra A)$ of $\algebra A$ follows. Note how the dual is smaller and easier to depict than the algebra.
\begin{figure}[H]\centering
\caption{The dual $\pf(\algebra A)$ of $\algebra A$}
\begin{tikzpicture}
   \node[state] (1) {$1, 2$}; 
   \node[state] (2) [right=of 1] {$3$}; 
    \draw[->]
     (1)    edge [loop left, double]  ()
    (2) 
         edge [loop right] ();      
    \draw[->]
    (1) edge [bend left, above] (2);
\end{tikzpicture}
\end{figure}
\end{example}

\subsection{The functor \texorpdfstring{$\pf$}{PF} on morphisms}

Let $h \from \algebra A \to \algebra B$ be a homomorphism of representable $\{\compo, \A, \R, \pref\}$-algebras. It is immediate that $h$ restricts to a Boolean algebra homomorphism from $\D[\algebra A]$ to $\D[\algebra B]$. The action of $\pf$ on morphisms of $\mathscr A$ is given by inverse image. In more detail, the continuous multivalued functor $\pf(h) \from \pf(\algebra B) \to \pf(\algebra A)$ is given by:
\begin{itemize}
\item
for an object $\mu \in \operatorname{uf}(\D[\algebra B])$:
\[\mu \mapsto (h|_{\D[\algebra A]})^{-1}(\mu),\]
\item
for an arrow $P \in \operatorname{pf}(\algebra B)$: \[P \mapsto \{Q \in \operatorname{pf}(\algebra A) \mid Q \subseteq h^{-1}(P)\}.\]
\end{itemize}
That the object component of $\pf(h)$ is a well defined and continuous function follows from classical Stone duality. The proof that $\pf(h)$ is a multivalued functor from the underlying category of $\pf(\algebra B)$ to that of $\pf(\algebra A)$ is \Cref{low_functor}. The proof that the arrow component of $\pf(h)$ is continuous is \Cref{continuous_functor}. The proof that $\pf(h)$ is star coherent is \Cref{star}. The proof that $\pf$ is itself functorial is \Cref{functorial}.

\begin{lemma4}\label{partition}
For any homomorphism $h \from \algebra A \to \algebra B$ and any prime filter $P$ of $\algebra B$, the set $h^{-1}(P)$ is partitioned into prime filters of $\algebra A$.
\end{lemma4}

\begin{proof}
Let $\mu$ be the ultrafilter $\D[P]$ and $\nu$ be the ultrafilter $ (h|_{\D[\algebra A]})^{-1}(\mu)$. The relation $a \sim b \iff \exists \alpha \in \nu : \alpha \compo a = \alpha \compo b$ is easily seen to be an equivalence relation on $h^{-1}(P)$. We claim that each $\sim$-equivalence class is a prime filter. 

Each equivalence class is by definition nonempty. Each equivalence class is a proper subset of $\algebra A$, for if $h^{-1}(P)$ contained $0$ then $P$ would contain $h(0) = 0$, in contradiction to $P$ being a prime filter.

We now argue equivalence classes are upward closed. Take $a \in h^{-1}(P)$ and $a' \geq a$. As $h$ is a homomorphism, it is order preserving, so $h^{-1}$ maps upward closed sets to upward closed sets. Hence, as $P$ is upward closed, $h^{-1}(P)$ is upward closed. So $a' \in h^{-1}(P)$. Further, $h(\D(a)) = \D(h(a)) \in \D[P] = \mu$, so $\D(a) \in \nu$. Hence $a \sim a'$, for $\D(a) \compo a' = a = \D(a) \compo a$. We conclude that $\sim$-equivalence classes are upward closed.

To show that equivalence classes are downward directed, take $a \sim b$ and $\alpha \in \nu$ such that $\alpha \compo a = \alpha \compo b$. By elementary reasoning about partial functions, $\alpha \compo a$ is a lower bound for $a$ and $b$, and \emph{if} $\alpha \compo a \in h^{-1}(P)$ then $\alpha \compo a \sim a$. Hence we only need to show $\alpha \compo a \in h^{-1}(P)$. We know $h(a) \in P$ and $h(\alpha) \in \mu$, and so $h(\alpha) \compo h(a) \in P$. But as $h$ is a homomorphism, $h(\alpha) \compo h(a) = h(\alpha \compo a)$, so $\alpha \compo a \in h^{-1}(P)$, as desired.

Finally, to show equivalence classes satisfy the primality condition, take $ b \pref c =a \in h^{-1}(P)$. Then as $\nu$ is an ultrafilter of $\D[\algebra A]$, either $\D(b) \in \nu$ or $\A(b) \in \nu$. If $\D(b) \in \nu$ then by the same argument appearing in the previous paragraph, $\D(b) \compo a \in h^{-1}(P)$. But $\D(b) \compo a = b$, so then $b \in h^{-1}(P)$. Similarly, if $\A(b) \in \nu$ then $\A(b) \compo a \in h^{-1}(P)$. But $\A(b) \compo a \leq c$ and $h^{-1}(P)$ is upward closed, so then $c \in h^{-1}(P)$.
\end{proof}

\begin{lemma4}\label{low_functor}
For any homomorphism $h \from \algebra A \to \algebra B$, the function/relation pair $\pf(h)$ is a multivalued functor between the underlying categories of $\pf(\algebra B)$ and $\pf(\algebra A)$.
\end{lemma4}

\begin{proof}
We must show that $\pf(h)$ validates conditions \ref{one}--\ref{three} of \Cref{multi-functor}.
\begin{enumerate}
\item
Let $P \from \D[P] \to \R[P]^\uparrow$ be an arrow in $\pf(\algebra B)$ (that is, a prime filter of $\algebra B$) and suppose $Q \subseteq \pf(h)(P)$ is a prime filter of $\algebra A$. We want to show that the ultrafilters $\D[P]$ and $\R[P]^\uparrow$ are mapped to the source and target of $Q$ respectively. That is, we want to show $(h|_{\D[\algebra A]})^{-1}(\D[P]) = \D[Q]$ and $(h|_{\D[\algebra A]})^{-1}(\R[P]^\uparrow) = \R[Q]^\uparrow$. We prove the second equality; the proof of the first is similar (but simpler). Suppose $\alpha \in \R[Q]^\uparrow$, so $\alpha \geq \beta = \R(b)$ for some $b \in Q$. Then $h(b) \in P$, so $h(\beta) = h(\R(b)) = \R(h(b)) \in \R[P]$. That is, $\beta \in (h|_{\D[\algebra A]})^{-1}(\R[P])$ (since $\beta \in \D[\algebra A]$). As $\beta \in (h|_{\D[\algebra A]})^{-1}(\R[P])$ is an ultrafilter, by upward closure $\alpha \in (h|_{\D[\algebra A]})^{-1}(\R[P])$ also. We have shown that $(h|_{\D[\algebra A]})^{-1}(\R[P]^\uparrow) \supseteq \R[Q]^\uparrow$. The reverse inclusion follows, since both sides are ultrafilters.

\item
We want to show that for every ultrafilter $\mu$ of $\D[\algebra B]$, the identity arrow on $\nu \coloneqq \pf(h)(\mu) = (h|_{\D[\algebra A]})^{-1}(\mu)$ belongs to $\pf(h)(\mu^\uparrow)$ (upward closure in $\algebra B$), that is, is a subset of $h^{-1}(\mu ^\uparrow)$. The identity arrow on $\nu$ is $\nu^\uparrow$. Suppose $a \geq \alpha \in \nu$. Then $h(a) \geq h(\alpha) \in \mu$, so $h(a) \in \mu^\uparrow$. That is, $a \in h^{-1}(\mu^\uparrow)$, as required.

\item
Suppose $P_1 \ccompo P_2$ is defined, $Q_1 \subseteq h^{-1}(P_1)$, and $Q_2 \subseteq h^{-1}(P_2)$. We know $Q_1 \ccompo Q_2$ is defined and has the same source as the prime filters that, by \Cref{partition}, partition $h^{-1}(P_1 \ccompo P_2)$. Choose some $a \in Q_1$ and $b \in Q_2$. Then $a \compo b \in Q_1 \cdot Q_2$, and $h(a \compo b) = h(a) \compo h(b) \in P_1 \ccompo P_2$. So $a \compo b$ also belongs to $h^{-1}(P_1 \ccompo P_2)$. Hence the prime filter $Q_1 \ccompo Q_2$ has nonempty intersection with one of the prime filters partitioning $h^{-1}(P_1 \ccompo P_2)$. By \Cref{intersection}, $Q_1 \ccompo Q_2$ \emph{equals} that prime filter. So $Q_1 \ccompo Q_2 \subseteq h^{-1}(P_1 \ccompo P_2)$, that is, $Q_1 \ccompo Q_2 \in \pf(h)(P_1 \ccompo P_2)$, as required.\qedhere
\end{enumerate}
\end{proof}

\begin{lemma4}\label{continuous_functor}
For any homomorphism $h \from \algebra A \to \algebra B$, the arrow component of the multivalued functor $\pf(h) \from\allowbreak \pf(\algebra B) \to \pf(\algebra A)$ is continuous.
\end{lemma4}

\begin{proof}
Since sets of the form $a^\theta$ form a subbasis for the topology on $\algebra A$, it suffices to show that given $a_1, \dots, a_n \in \algebra A$, the set $\pf(h)^{-1}(\bigcap_i a_i^\theta)$ is open in $\algebra B$. Suppose $P \in \pf(h)^{-1}(\bigcap_i a_i^\theta)$. Then there exists a prime filter $Q \subseteq h^{-1}(P)$ with $Q \in \bigcap_i a_i^\theta$. So $a_1, \dots, a_n \in Q$. Since $Q$ is a filter, there is an $a \in Q$ with $a \leq a_1, \dots, a_n$. Then $h(a) \in P$, that is, $P \in h(a)^\theta$. And for any prime filter $P'$ of $\algebra B$:
\begin{align*}
P' \in h(a)^\theta &\implies h(a) \in P'\\
&\implies a \in h^{-1}(P')\\
&\implies \exists {Q' \in \operatorname{pf(\algebra A)}} : a \in Q' \subseteq h^{-1}(P'),
\end{align*}
so for such a $Q'$:
\begin{align*}
\phantom{P' \in h(a)^\theta}&\implies a_1, \dots, a_n \in Q'\\
&\implies Q' \in a_1^\theta, \dots, a_n^\theta\\
&\implies Q' \in \bigcap_i a_i^\theta,
\end{align*}
and hence $P' \in \pf(h)^{-1}(\bigcap_i a_i^\theta)$. So the open set $h(a)^\theta$ contains $P$ and is included entirely within $\pf(h)^{-1}(\bigcap_i a_i^\theta)$. Hence $\pf(h)^{-1}(\bigcap_i a_i^\theta)$ is open.
\end{proof}

\begin{lemma4}\label{star}
For any homomorphism $h \from \algebra A \to \algebra B$, the multivalued functor $\pf(h) \from\allowbreak \pf(\algebra B) \to \pf(\algebra A)$ is star coherent.
\end{lemma4}

\begin{proof}
For star injectivity, suppose $P_1, P_2 \in \pf(\algebra B)$ are prime filters with the same source---$\D[P_1] = \D[P_2]$---and such that $Q \in \pf(h)(P_1) \cap \pf(h)(P_2)$. From the latter, we have $Q \subseteq h^{-1}(P_1)$ and $Q \subseteq h^{-1}(P_2)$. Then as $Q$ is nonempty, we can choose some $a \in Q$ and obtain $h(a) \in P_1, P_2$. By \Cref{intersection}, we get $P_1 = P_2$, confirming star injectivity.

For star surjectivity, suppose $\mu$ is an ultrafilter of $\D[\algebra B]$, and $Q$ is a prime filter of $\algebra A$ with $\D[Q] = (h|_{\D[\algebra A]})^{-1}(\mu)$. Our objective is to find some prime filter $P$ of $\algebra B$ with $\D[P] = \mu$ and $Q \in \pf(h)(P)$. Choose some element $a \in Q$. Then $\D(a) \in h^{-1}(\mu)$, so $h(\D(a)) \in \mu$. Since $h$ is a homomorphism, this gives $\D(h(a)) \in \mu$. This implies $0 \not\in (\mu \compo h(a))^\uparrow$, and hence $(\mu \compo h(a))^\uparrow$ is a prime filter of $\algebra B$, which we denote by $P$. We know $\D[P] = \mu$, and we claim that $Q \in \pf(h)(P)$. By \Cref{partition}, the set $h^{-1}(P)$ includes a prime filter $Q'$ containing $a$. Clearly $\D[Q'] = (h|_{\D[\algebra A]})^{-1}(\mu)$, and so $\D[Q'] =\D[Q]$. Then $Q' = Q$, by \Cref{intersection}. 
 Hence $Q \subseteq h^{-1}(P)$, that is, $Q \in \pf(h)(P)$. As $\mu$ was arbitrary, and $Q$ was arbitrary subject to $\D[Q] = (h|_{\D[\algebra A]})^{-1}(\mu)$, the relation $\pf(h)$ is star surjective.

For co-pseudo star surjectivity, suppose $U$ is an open set of arrows in $\pf(\algebra A)$. Since the topology on the arrows of $\pf(\algebra A)$ is generated by sets of the form $a^\theta$, we may assume $U = \bigcap_i a_i^\theta$ for some $a_1, \dots, a_n$. Now suppose $\mu$ is an ultrafilter of $\D[\algebra B]$ and $Q$ is a prime filter of $\algebra A$ with $Q \in U$ and $\R[Q]^\uparrow = (h|_{\D[\algebra A]})^{-1}(\mu)$. Then $a_1, \dots, a_n \in Q$, and as $Q$ is downward directed we can choose some $a \in Q$ with $a \leq a_1, \dots, a_n$. By the second hypothesis on $Q$, we know $\R(a) \in h^{-1}(\mu)$, so $h(\R(a)) \in \mu$. Since $h$ is a homomorphism, this gives $\R(h(a)) \in \mu$. By \Cref{filter-range}, there exists a prime filter $P$ containing $h(a)$ and such that $\R[P]^\uparrow = \mu$. Then $h^{-1}(P)$ contains $a$, so one of the prime filters that, by \Cref{partition}, partition $h^{-1}(P)$, contains $a$. Call this prime filter $Q'$. By upward closure, $a_1, \dots, a_n \in Q'$, hence $Q' \in U$. Since $Q'$ is in the image under $\pf(h)$ of $P \in \operatorname{Hom}(-, \mu)$, this completes the proof.
\end{proof}

\begin{lemma4}\label{functorial}
The map $\pf \from \mathscr A \to \mathscr C$ preserves composition of morphisms and identity morphisms and hence is a functor from $\mathscr A$ to $\mathscr C$.
\end{lemma4}

\begin{proof}
Let $h_1 \from \algebra A \to \algebra B$ and $h_2 \from \algebra B \to \algebra C$ be homomorphisms of representable $\{\compo, \A, \R, \pref\}$-algebras. Since the object components of $\pf(h_1)$, $\pf(h_2)$, and $\pf(h_2 \circ h_1)$ are given by inverse images of the induced maps $\D[\algebra A] \to \D[\algebra B] \to \D[\algebra C]$, the object components of $\pf(h_2) \circ \pf(h_1)$ and $\pf(h_2 \circ h_1)$ coincide. By \Cref{partition}, we can make the same conclusion for the arrow components. It is evident that $\pf$ applied to an identity map yields an identity map.
\end{proof}

The next lemma relates to the restricted duality of \Cref{restricted}.

\begin{lemma4}\label{proper0}
If $h$ is a locally proper homomorphism, the multivalued functor $\pf(h) \from \allowbreak\pf(\algebra B) \to \pf(\algebra A)$ is a functor.
\end{lemma4}

\begin{proof}
Conditions \ref{one}--\ref{three} of \Cref{multi-functor} reduce to the conditions defining a functor in the case that the relation component of the multivalued functor is (precisely) single valued. Hence we only need to know that $\pf(h)$ relates each prime filter $P$ of $\algebra{B}$ to precisely one prime filter of $\algebra{A}$. This is practically the definition of a locally proper homomorphism. For by the definition of a locally proper homomorphism, the set $h^{-1}(P)$ \emph{is} a prime filter. So by definition $\pf(h)$ relates $P$ to $h^{-1}(P)$. Conversely, if $\pf(h)$ relates $P$ to a prime filter $Q$, then by definition $Q \subseteq h^{-1}(P)$. Since $Q$ and $h^{-1}(P)$ are both prime filters, this gives $Q = h^{-1}(P)$.
\end{proof}

\section{From topological categories to algebras}\label{category_to_algebra}

In this section, we define the contravariant functor $\cpt \from \mathscr C \to \mathscr A$ used for the second half of the duality. The notation for the functor stands for `section on a clopen'.

\begin{definition}
Let $\pi \from E \to X$ be a local homeomorphism of topological spaces. A \defn{(local) section} of $\pi$ is a continuous function $f \from U \to E$, for some open $U \subseteq X$, with $\pi \circ f = \operatorname{id}_U$.
\end{definition}

Since a local section is completely determined by its image, we will often identify it with this image, in which case an upper-case Roman letter will be used.

\subsection{The functor \texorpdfstring{$\cpt$}{SecCl} on objects}

Let $\topo C$ be a Stone \'etale category with objects $O$ and arrows $M$ all of which are epimorphisms. Then we define $\cpt(\topo C)$ to be the following $\{\compo, \A, \R, \pref\}$-algebra.
\begin{itemize}
\item
 The universe of $\cpt(\topo C)$ is the set of all local sections $U \to M$ of $\dd \from M \to O$ with $U$ clopen.
 
\item
The operation $\compo$ is defined by $A \compo B \coloneqq \{a \cdot b \mid a \in A,\ b \in B,\text{ and }\rr(a) = \dd(b)\}$. The confirmation that $A \compo B$ is a section on a clopen is \Cref{clopen1}.

\item
The operation $\A$ is defined by $\A(A) \coloneqq \{1_x \mid x \in O \setminus \dd[A]\}$. The confirmation that $\A(A)$ is a section on a clopen is \Cref{clopen2}.

\item
The operation $\R$ is defined by $\R(A) \coloneqq \{1_x \mid x \in \rr[A]\}$. The confirmation that $\R(A)$ is a section on a clopen is \Cref{clopen3}.

\item
The operation $\pref$ is defined by $A \pref B \coloneqq A \cup (\A(A) \compo B)$. The confirmation that $A \pref B$ is a section on a clopen is \Cref{clopen4}.
\end{itemize}
Proving that $\cpt(\topo C)$ is representable by partial functions is achieved by verifying that it validates all the equations and quasiequations of \Cref{axiomatisation}; this is done in \Cref{validates}.



\begin{lemma5}\label{clopen2}
Let $A$ be a section with clopen domain. Then $\A(A) \coloneqq \{1_x \mid x \in O \setminus \dd[A]\}$ is a section with clopen domain.
\end{lemma5}

\begin{proof}
By definition, $\dd[A]$ is clopen in $O$, hence $O \setminus \dd[A]$ is clopen in $O$. So $\A(A)$ defines a function $f \from x \mapsto 1_x$ with clopen domain, clearly with left inverse $\dd$. It remains to show this function is continuous. But this is immediate, since $f$ is a restriction---given by a restriction of the domain---of the (continuous) identity-assigning map, and all such restrictions of continuous maps are continuous.
\end{proof}

\begin{lemma5}\label{open_image}
Let $\pi \from E \to X$ be a local homeomorphism and $U \subseteq X$ be open, and suppose $f \from U \to E$ has left inverse $\pi$. Then $f$ is continuous (that is, is a local section) if and only if $f[U]$ is open in $E$.
\end{lemma5}

\begin{proof}
For the forward direction let $e \in f[U]$. Then as $\pi$ is a local homeomorphism, there is some $V$ open in $E$ and containing $e$ such that $\pi|_V$ is a homeomorphism onto its image $\pi|_V[V] = \pi[V]$. As $f$ is continuous, the set $f^{-1}(V)$, which equals $f^{-1}(V \cap f[U])$, is open in $X$. But $f^{-1}(V \cap f[U]) \subseteq \pi[V]$, so as $\pi_V$ is a homeomorphism the inverse image of $f^{-1}(V \cap f[U])$ under $\pi|_V$, that is, $V \cap f[U]$, is open in $V$. Since $V$ itself is open, $V \cap f[U]$, which contains $e$, is open in $E$. Since $e$ was an arbitrary element of $f[U]$, the set $f[U]$ is open in $E$.

Conversely, suppose $f[U]$ is open. Let $V$ be an open subset of $E$, hence an open subset of $f[U]$. We want to show that $f^{-1}(V)$ is open. It suffices to show that any $x \in f^{-1}(V)$ is contained in an open neighbourhood included in $f^{-1}(V)$. But this is clear, since $\pi$ is a local homeomorphism, hence maps some open $W$ containing $f(x)$, and contained in $V$, to the open $\pi[W] = f^{-1}(W)$, which of course contains $x$ and is included in $V$.
\end{proof}

Note that \Cref{open_image} immediately implies that the identity arrows form an open set, since the identity-assigning map is manifestly a section.

\begin{lemma5}\label{clopen3}
Let $A$ be a section with clopen domain. Then $\R(A) \coloneqq \{1_x \mid x \in \rr[A]\}$ is a section with clopen domain.
\end{lemma5}

\begin{proof}
Let $A$ correspond to the function $f \from U \to M$. Then $\R(A)$ is the identity-assigning map restricted to $(\rr \circ f) [U]$. As $\R(A)$ is a restriction of a continuous function it is continuous. By \Cref{open_image} (with local homeomorphism set to $\dd$), the set $f[U]$ is open. Then since $\rr$ is an open map, $(\rr \circ f) [U]$ is open. It remains to show $(\rr \circ f) [U]$ is closed. Now $\rr \circ f$ is a continuous map from a compact space ($U$) to a Hausdorff space (the space of objects). It is a basic and easy-to-prove result of general topology that such a map is a closed map (the `closed map lemma'). Hence $(\rr \circ f) [U]$ is indeed closed.
\end{proof}

\begin{lemma5}\label{clopen1}
Let $A$ and $B$ be sections with clopen domains. Then \[A \compo B \coloneqq \{a \cdot b \mid a \in A,\ b \in B,\text{ and }\rr(a) = \dd(b)\}\] is a section with clopen domain.
\end{lemma5}

\begin{proof}
Let $A$ correspond to the function $f \from U \to M$ and $B$ to $g \from V \to M$. It is clear that $A \compo B$ corresponds to a function $h$ on a subset of $O$ and that $\dd$ is a left inverse for this function. Since $h$ can be expressed as a composition $g' \circ {\rr}' \circ f'$ of restrictions of the continuous functions $f$, $\rr$, and $g$, we see that $h$ is continuous. It remains to show that the domain of $h$ is clopen. By \Cref{clopen3}, the set $({\rr} \circ f)[U]$ is clopen; hence $({\rr} \circ f)[U] \cap V$ is clopen. Now the domain of $h$ is $({\rr} \circ f)^{-1}(({\rr} \circ f)[U] \cap V)$, so clopen by continuity of $f$ and $\rr$.
\end{proof}

\begin{lemma5}\label{clopen4}
If $A$ and $B$ are sections on clopens, then so is $A \pref B \coloneqq A \cup (\A(A) \compo B)$.
\end{lemma5}

\begin{proof}
It is immediate that $A \pref B$ defines a function with left inverse $\dd$. By \Cref{clopen1} and \Cref{clopen2}, we know $\A(A) \compo B$ is a section on a clopen, and hence the domain of $A \pref B$ is clopen. It remains to argue that $A \pref B$ is continuous. But this is a function given by the union of two continuous functions with open domains, which always yields a continuous function.
\end{proof}

\begin{lemma5}\label{validates}
The $\{\compo, \A, \R, \pref\}$-algebra $\cpt(\topo C)$ validates the axioms for representability by partial functions listed in \Cref{axiomatisation}.
\end{lemma5}

\begin{proof}
We state each axiom anew before giving a justification for its validity.
\[A \compo (B \compo C) = (A \compo B) \compo C\]
Clear.
\[\A(A) \compo A = \A(B) \compo B\]
Both sides yield the empty set.
\[A \compo \A(B) = \A(A \compo B) \compo A\]
Suppose $c \in A \compo \A(B)$. Then $c \in A$ and there is no arrow in $B$ whose source is $\rr(c)$. Since $c$ is the unique arrow in $A$ with $\dd(c)$, there is then no pair of composable arrows $a \in A$ and $b\in B$ with $\dd(a) = \dd(c)$. Hence $1_{\dd(c)}$ is in $ \A(A \compo B)$, so $c$ belongs to the right-hand side.

Conversely, suppose $c \in \A(A \compo B) \compo A$, then in particular there is no $(A, B)$-path from $\dd(c)$ so in particular, no arrow in $B$ whose source is $\rr(c)$. Hence $c \in A \compo \A(B)$. 
\[\D(A) \compo B = \D(A) \compo C \land \A(A) \compo B = \A(A) \compo C \qquad\limplies\qquad B = C\]
For any partition $I, J$ of the identity arrows into two parts, any set $D$ of arrows is the union of $I \compo D$ and $J \compo D$. Since $\D(A), \A(A)$ is such a partition, the validity of the quasiequation follows.
\[{\id} \compo A = A\]
We noted that $\A(A) \compo A$ yields the empty set. So $\id$ is by definition $\A(\emptyset)$---precisely the identity arrows. The equation is then clear.
\[0 \compo A = A\]
Clear.
\[\D(\R(A)) = \R(A)\]
The set $\R(A)$ is a set of identity arrows, and $\D \coloneqq \A^2$ is the identity operation on any such set.
\[A \compo \R(A) = A\]
By definition $\R(A)$ is all identities on objects that are the target of some arrow in $A$. So the validity of the equation is clear. 
\[A \compo B = A \compo C \qquad\limplies\qquad \R(A) \compo B = \R(A) \compo C\]
This is the only case of note, for we must use the fact that all arrows of $\topo C$ are epimorphisms. Assume the antecedent holds, and suppose $b \in \R(A) \compo B$. Then $b \in B$ and there is some $a\in A$ whose target is $\dd(b)$. So $a \cdot b \in A \compo B$ and hence, by the supposition, $a \cdot b \in A \compo C$. Hence $a \cdot b = a' \cdot c$, for some $a' \in A$ and $c \in C$, though necessarily $a = a'$, as $A$ is a section and $\dd(a) = \dd(a \ccompo b) = \dd(a' \ccompo c) = \dd(a')$. Since $a$ is an epimorphism, we obtain $b = c$, hence $b \in C$, and therefore $b \in \R(A) \compo C$. The reverse inclusion is by a symmetric argument.
\[\D(A) \compo (A \pref B) = A\]
If $a \in \D(A) \compo (A \pref B)$ then firstly there is an arrow in $A$ with the same source as $a$. Secondly, by the definition of $A \pref B$ on local sections, $a$ is in either $A$ or $\A(A) \compo B$. But $a$ cannot be in $\A(A) \compo B$, since that implies there \emph{is not} an arrow in $A$ with the same source as $a$. Hence $a \in A$. Conversely, if $a \in A$ then $a \in A \pref B$ and $1_{\dd(a)} \in \D(A)$, so $1_{\dd(a)} \cdot a = a \in \D(A) \compo (A \pref B)$.
\[\A(A) \compo (A \pref B) = \A(A) \compo B\]
If $a \in \A(A) \compo (A \pref B)$ then firstly there is no arrow in $A$ with the same source as $a$. Secondly, by the definition of $A \pref B$ on local sections, $a$ is in either $A$ or $\A(A) \compo B$---it must be $\A(A) \compo B$. Conversely, if $a \in \A(A) \compo B$ then immediately $a\in A \pref B$, using the definition of $\pref$ on local sections again. That $a \in \A(A) \compo B$ also implies $1_{\dd(a)} \in \A(A)$. Hence $a = 1_{\dd(a)} \cdot a  \in \A(A) \compo (A \pref B)$.
\end{proof}

\begin{example}
Returning again to our running example (\Cref{small} and \Cref{small_dual}), the reader may examine the (discrete) category $\pf(\algebra A)$ described in \Cref{small_dual} and calculate \emph{its} dual. Sections must contain at most one arrow with source `$1,2$' (there are three such arrows, so four possible choices) and at most one arrow with source `$3$' (one arrow, so two choices). In total there are eight sections, so eight element of the dual of $\pf(\algebra A)$. One can verify that this `double dual' is isomorphic to the original algebra $\algebra A$.
\end{example}

\subsection{The functor \texorpdfstring{$\cpt$}{SecCl} on morphisms} 

The action of $\cpt$ on morphisms is given by inverse image. That is, given $\topo C, \topo D$ Stone \'etale categories whose arrows are epimorphisms and a star-coherent multivalued functor $F \from \topo C \to \topo D$, we define 
\begin{align*}
\cpt (F) \from \cpt(\topo D) &\to \cpt(\topo C)\\
A &\mapsto F^{-1}(A).
\end{align*}
First it must be checked that $F^{-1}(A)$ really is an element of $\cpt (\topo C)$. If $b_1, b_2 \in F^{-1}(A)$ have the same source, then as $F$ is a multivalued functor, arrows in $F(b_1)$ and $F(b_2)$ all share the same source. As $A$ is a section, $b_1$ and $b_2$ must be inverse images under $F$ of the same $a \in A$. Then by star injectivity of $F$, we get $b_1 = b_2$. Hence $F^{-1}(A)$ defines a function on $\dd[F^{-1}(A)]$. As $F$ is by assumption continuous, and $\dd[A]$ is clopen, $F^{-1}(\dd[A]) = \dd[F^{-1}(A)]$ (star surjectivity) is clopen. Since $F^{-1}(A)$ is open, \Cref{open_image} implies that it corresponds to a \emph{continuous} function. That $\cpt(F)$ is a homomorphism of $\{\compo, \A, \R, \pref\}$-algebras is \Cref{homomorphism}. It is immediate from its definition by inverse image that $\cpt$ is functorial, that is, respects compositions of star-coherent multivalued functors and acts as the identity on identity functors.

\begin{lemma5}\label{homomorphism}
The map $\cpt (F) \from \cpt(\topo D) \to \cpt(\topo C)$ of $\{\compo, \A, \R, \pref\}$-algebras is a homomorphism.
\end{lemma5}

\begin{proof}
We start by showing that $\compo$ is preserved. Let $A$ and $B$ be sections of $\topo D$. If $c \in F^{-1}(A) \compo F^{-1}(B)$ that means $c = c_1 \ccompo c_2$ for some $c_1, c_2$ such that there exist $a \in A \cap F(c_1)$ and $b \in B \cap F(c_2)$. Then by functoriality of $F$, we know $a \ccompo b \in F(c)$, so $a \ccompo b \in (A \compo B) \cap F(c)$. That is, $c \in F^{-1}(A \compo B)$. Hence $F^{-1}(A) \compo F^{-1}(B) \subseteq F^{-1}(A \compo B)$. Conversely, if $c \in F^{-1}(A \compo B)$, with $ a \cdot b \in F(c)$ say, then by star surjectivity applied at $\dd (a)$ there is some $c_1$ with the same source as $c$ and with $a \in F(c_1)$. Applying star surjectivity again at $\rr(a) = \dd(b)$ we obtain some $c_2$ with $c_1 \cdot c_2 = c$ and $b \in F(c_2)$. Hence $c \in F^{-1}(A) \compo F^{-1}(B)$, and we conclude that $F^{-1}(A) \compo F^{-1}(B) = F^{-1}(A \compo B)$.

Next we show that $\A$ is preserved. Let $B$ be a section of $\topo D$. First suppose $a \in F^{-1}(\A(B))$, with $1_y \in F(a) \cap \A(B)$ say. Then by functoriality of $F$, the identity arrow $1_y$ belongs to $F(1_{\dd(a)})$. By star injectivity of $F$, we find $a = 1_{\dd(a)}$. Since $1_y \in \A(B)$ there is no member of $B$ with source $y$, hence there is no member of $F^{-1}(B)$ with source $\dd(a)$. So by definition, $1_{\dd(a)} \in \A(F^{-1}(B))$, that is, $a \in \A(F^{-1}(B))$. We have our first inclusion: $F^{-1}(\A(B)) \subseteq \A(F^{-1}(B))$. Conversely, suppose $1_x \in \A(F^{-1}(B))$. We want to argue that $1_{F(x)}$ is in $\A(B)$. But if there were a member $b$ of $B$ with source $F(x)$, then by star surjectivity of $F$, there would be an arrow with source $x$ whose image under $F$ contains $b$, contradicting the fact that $1_x \in \A(F^{-1}(B))$. Hence $1_{F(x)}$ is indeed in $\A(B)$, and since $1_{F(x)} \in F(1_x)$ this gives $1_x \in F^{-1}(\A(B))$. We conclude that $F^{-1}(\A(B)) = \A(F^{-1}(B))$.

Showing that $\R$ is preserved is fairly similar. Let $B$ be a section of $\topo D$ with a clopen domain. By \Cref{open_image}, the set $B$ is open. First suppose we have an element in $F^{-1}(\R(B))$. By functoriality and star injectivity of $F$, our element of $F^{-1}(\R(B))$ is an identity element, $1_y$ say. By definition of $\R$, there is some $b \in B$ with target $F(y)$. As $B$ is open, by co-pseudo star surjectivity of $F$ there is some $b' \in B$ belonging to $F(a)$ for some $a$ with target $y$. So $a \in F^{-1}(B)$, hence $1_y \in \R(F^{-1}(B))$. We conclude that $F^{-1}(\R(B)) \subseteq \R(F^{-1}(B))$. 
Conversely, suppose $1_y \in \R(F^{-1}(B))$. So there is an $a$ with target $y$ such that $F(a) \cap B \neq \emptyset$. Then $1_{F(y)} \in \R(B)$, so $1_y \in F^{-1}(\R(B))$. We conclude that $F^{-1}(\R(B)) = \R(F^{-1}(B))$.

Finally we note that $\pref$ is preserved, because of the definition $A \pref B \coloneqq A \cup \A(A) \compo B$ and the elementary fact that relation inverse images preserve unions.
\end{proof}

\section{The functors form a duality}\label{are_dual}

In this section we will first show that the double dual functor on the category $\mathscr A$ of	representable $\{\compo, \A, \R, \pref\}$-algebras is naturally isomorphic to the identity functor. Then we will do the same for the double dual functor on the category $\mathscr C$ of Stone \'etale categories all of whose arrows are epimorphisms. This will complete the proof that we have given a duality between the categories $\mathscr A$ and $\mathscr C$.

\subsection{The double dual on algebras}

First we describe an isomorphism from $\algebra A$ to $\cpt(\pf(\algebra A))$ for an arbitrary representable $\{\compo, \A, \R, \pref\}$-algebra. Then we will show this construction is natural.

In fact, our isomorphism is already hidden in notation we have defined. Recall that for $a \in \algebra A$, the set $a^\theta$ is defined to be $\{P \in \operatorname{pf}(\algebra A) \mid a \in P\}$. We define $\theta \from \algebra A \to \cpt(\pf(\algebra A))$ by $a \mapsto a^\theta$. Note that $a^\theta$ is indeed an element of the algebra $\cpt(\pf(\algebra A))$, for it is clearly a section, and its domain is the set $\widehat {\D(a)} \coloneqq \{\mu \in \operatorname{uf}(\D[\algebra A]) \mid {\D(a)} \in \mu\}$, which is open by definition and closed because $\widehat {\A(a)}$ is open.

To see that $\theta$ is injective, it suffices to show that when $a \not\leq b$ there exists a prime filter containing $a$ but not $b$. This argument can be found in the proof of Lemma~4.9 in \cite{hirsch}. To see that $\theta$ is surjective, we need to argue that \emph{all} sections on clopens of $\pf(\algebra A)$ are of the form $a^\theta$. Let $A$ be a section on a clopen of $\pf(\algebra A)$. By a similar argument to that in the proof of \Cref{continuous_functor}, for each $P \in A$ there is an $a_P$ with $P \in a_P^\theta \subseteq A$. Recall that the space of objects of $\pf(\algebra A)$ is the Stone dual of $\D[\algebra A]$, so in particular is compact. Hence the domain of $A$ is compact and can be covered by $\widehat{\D(a_{P_1})}, \dots, \widehat{\D(a_{P_n})}$ for some finite $n$. Then $A = (a_{P_1} \pref \dots \pref a_{P_n})^\theta$.

\begin{lemma}
The map $\theta \from \algebra A \to \cpt(\pf(\algebra A))$ given by $a \mapsto a^\theta$ is a homomorphism of $\{\compo, \A, \R, \pref\}$-algebras.
\end{lemma}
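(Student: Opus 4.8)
The plan is to verify, case by case, that $\theta$ preserves each of the four operations $\compo$, $\A$, $\R$, and $\pref$. Throughout I will use the description of $a^\theta$ as a section: its domain is $\widehat{\D(a)}$, and over each object $\mu \in \widehat{\D(a)}$ the unique arrow of $a^\theta$ is the prime filter $(\mu \compo a)^\uparrow$ (existence from the prime-filter characterisation, uniqueness from \Cref{intersection}). I will also repeatedly invoke the observation that a prime filter $P$ contains a domain (sub-identity) element $\alpha$ if and only if $P$ is the identity arrow $\mu^\uparrow$ of the ultrafilter $\mu$ containing $\alpha$: if $\alpha \in P$, downward directedness produces an element of $P$ below $\id$, which forces every such witness to be a domain element, so $P = \D[P]^\uparrow$.

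For $\A$ and $\R$ the arguments are short. Since $\A(a)$ is a domain element, the observation above gives $\A(a)^\theta = \{\mu^\uparrow \mid \A(a) \in \mu\}$. On the other side $\A(a^\theta) = \{1_\mu \mid \mu \in O \setminus \dd[a^\theta]\}$, and \Cref{source_lemma} tells us $\dd[a^\theta] = \widehat{\D(a)}$, so the complementary objects are exactly those $\mu$ with $\A(a) \in \mu$; hence the two sections coincide. The case of $\R$ is identical, except it uses \Cref{target_lemma}, which gives $\rr[a^\theta] = \widehat{\R(a)}$, to match $\R(a^\theta) = \{1_\mu \mid \mu \in \rr[a^\theta]\}$ with $\R(a)^\theta = \{\mu^\uparrow \mid \R(a) \in \mu\}$.

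The composition case is where the real work lies, and I expect the reverse inclusion to be the main obstacle. The inclusion $a^\theta \compo b^\theta \subseteq (a \compo b)^\theta$ is easy, since any composable pair $P \in a^\theta$, $Q \in b^\theta$ has $a \compo b \in P \compo Q \subseteq (P \compo Q)^\uparrow$. For the reverse inclusion I take a prime filter $R$ with $a \compo b \in R$ and must exhibit it as a genuine composite. Setting $\mu := \D[R]$, the partial-function inequality $\D(a \compo b) \le \D(a)$ yields $\D(a) \in \mu$, so $P := (\mu \compo a)^\uparrow$ is a prime filter in $a^\theta$ with source $\mu$. The crux is to show that $\D(b)$ lies in the target $\nu := \R[P]^\uparrow$ of $P$; I plan to do this by choosing $\gamma \in \mu$ with $\gamma \le \D(a \compo b)$, for which $\gamma = \D(\gamma \compo a) = \D(\gamma \compo a \compo b)$, forcing $\R(\gamma \compo a) \le \D(b)$ and hence $\D(b) \in \R[P]^\uparrow$. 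Granting this, $Q := (\nu \compo b)^\uparrow$ is a prime filter in $b^\theta$ composable with $P$ (so $(P \compo Q)^\uparrow$ is a prime filter, by \Cref{composition}); since both $(P \compo Q)^\uparrow$ and $R$ contain $a \compo b$ and have source $\mu$, \Cref{intersection} identifies them, completing the reverse inclusion.

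Finally, $\pref$ follows from the previous cases together with one fact about joins. Using the identity $a \pref b = a \bjoin (\A(a) \compo b)$, valid in any representable $\{\compo, \A, \R, \pref\}$-algebra, and the matching definition $A \pref B = A \cup (\A(A) \compo B)$ in $\cpt$, it suffices to prove that for compatible elements $x, y$ one has $(x \bjoin y)^\theta = x^\theta \cup y^\theta$. This is immediate from primality: upward closure gives $x^\theta \cup y^\theta \subseteq (x \bjoin y)^\theta$, while the equality $x \bjoin y = x \pref y$ for compatible elements, combined with the primality of prime filters, gives the reverse. Applying this with $y = \A(a) \compo b$ (compatible with $a$, since their domains are disjoint) and substituting the already-established preservation of $\A$ and $\compo$ yields $(a \pref b)^\theta = a^\theta \cup (\A(a^\theta) \compo b^\theta) = a^\theta \pref b^\theta$, as required.
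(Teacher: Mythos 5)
Your proof is correct and follows essentially the same route as the paper's: operation-by-operation verification, with the reverse inclusion for $\compo$ handled by decomposing a prime filter containing $a \compo b$ into $(\mu \compo a)^\uparrow$ and $(\nu \compo b)^\uparrow$ and identifying via \Cref{intersection}, and with $\pref$ reduced to primality plus upward closure. Your only departures are cosmetic: you shortcut the $\A$ and $\R$ cases by citing the equalities $\dd[a^\theta] = \widehat{\D(a)}$ and $\rr[a^\theta] = \widehat{\R(a)}$ from \Cref{source_lemma} and \Cref{target_lemma} (whose proofs contain exactly the arguments the paper repeats inline), and you verify $\D(b) \in \R[P]^\uparrow$ by a direct partial-function computation where the paper invokes the $(\mu \compo a \compo \nu \compo b)^\uparrow$ machinery from \cite{hirsch}.
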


\begin{proof}
To show $\compo$ is preserved, let $a, b \in \algebra A$. If $P \in a^\theta \compo b^\theta$, that means there are $P_1$ containing $a$ and $P_2$ containing $b$ such that $(P_1 \compo P_2)^\uparrow = P$. Hence $a \compo b \in P$, giving $P \in (a \compo b)^\theta$. Conversely, suppose $P \in (a \compo b)^\theta$. Then $P$ is of the form $(\mu \compo a \compo b)^\uparrow$, where $\mu$ is the ultrafilter $D[P]$ of $\D[\algebra A]$. As $P$ does not contain $0$, neither does $(\mu \compo a)^\uparrow$, which is therefore a prime filter, $P_1$ say. Let $\nu$ be the ultrafilter $\R[P_1]^\uparrow$. Using again the fact that $P = (\mu \compo a \compo b)^\uparrow = (\mu \compo a \compo \nu \compo b)^\uparrow$ does not contain $0$, the filter $(\nu \compo b)^\uparrow$ is a prime filter, $P_2$ say. Then $P_1 \in a^\theta$ and $P_2 \in b^\theta$ with $P_1 \ccompo P_2 = P$, hence $P \in a^\theta \compo b^\theta$.

To show $\A$ is preserved, let $a \in \algebra A$. If $P \in \A(a^\theta)$, then $P = \mu^\uparrow$ for some ultrafilter $\mu$ of $\D[\algebra A]$, and there is no prime filter containing $a$ with source $\mu$. Hence the filter $(\mu \compo a)^\uparrow$ is not proper. That is, there is some $\alpha \in \mu$ such that $\alpha \compo a = 0$. It is a property of partial functions that this implies $\alpha \leq \A(a)$, so, by upward closure of $\mu^\uparrow$, we know $\A(a) \in \mu^\uparrow = P$. Hence $P \in \A(a)^\theta$. Conversely, suppose $P \in \A(a)^\theta$. Then $\A(a) \in P$ so ${\id} \in P$. Hence $P$ is of the form $(\mu \compo \id)^\uparrow = \mu^\uparrow$ for some ultrafilter $\mu$. Hence $P$ is the identity arrow for $\mu$ in the category $\pf(\algebra A)$. We know $\A(a) \in \mu$. If there were a prime filter $P'$ containing $a$ with source $\mu$ then $\D(a)$ would also be in $\mu$, contradicting $\mu$ being proper. Hence there is no such $P'$, and we conclude $P \in \A(a^\theta)$.

To show $\R$ is preserved, let $a \in \algebra A$. If $P \in \R(a^\theta)$, then $P = \mu^\uparrow$ for some ultrafilter $\mu$ of $\D[\algebra A]$, and there is some prime filter $P'$ containing $a$ with target $\R[P]^\uparrow = \mu$. Then $\R(a) \in \mu$, so $\R(a) \in \mu^\uparrow = P$. Hence $P \in \R(a)^\theta$. Conversely, suppose $P \in \R(a)^\theta$. Then $\R(a) \in P$ so ${\id} \in P$. Hence $P$ is of the form $(\mu \compo \id)^\uparrow = \mu^\uparrow$ for some ultrafilter $\mu$. Hence $P$ is the identity arrow for $\mu$ in the category $\pf(\algebra A)$. We know $\R(a) = \D(\R(a)) \in \D[P] = \mu$. By \Cref{filter-range} there exists a prime filter $P$ containing $a$ with target $\mu$. We conclude $P \in \R(a^\theta)$.

To see that $\pref$ is preserved, note that if a prime filter contains $a \pref b = a \pref (\A(a) \compo b)$ it contains $a$ or $\A(a) \compo b$ (by primality), and if it contains $a$ or $\A(a) \compo b$ then it contains $a \pref b$ (by upward closure). Thus $(a \pref b)^\theta = a^\theta \cup (\A(a) \compo b)^\theta$. The latter, given we know $\compo$ and $\A$ are preserved by $\theta$, equals $a^\theta \cup (\A(a^\theta) \compo b^\theta)$, which by definition is $a^\theta \pref b^\theta$.
\end{proof}

We now show that our isomorphisms together give a natural transformation from the identity functor to the double dual. For each $\algebra A \in \mathscr A$, denote now the isomorphism just described by $\theta_{\algebra A}$. Then given $\algebra A, \algebra B \in \mathscr A$ and a homomorphism $h \from \algebra A \to \algebra B$, we are required to show that $\cpt(\pf(h)) \circ \theta_{\algebra A} = \theta_{\algebra B} \circ h$. The right-hand side sends an element $a \in \algebra A$ to the set $h(a)^\theta$ of prime filters of $\algebra B$. Seeing that the left-hand side has the same effect just involves unravelling the definitions. The element $a$ is sent first to $a^\theta$, then $\cpt(\pf(h))$ sends this to
\begin{align*}
\{P \in \pf(\algebra B) \mid \pf(h)(P) \in a^\theta\} &= \{P \in \pf(\algebra B) \mid h^{-1}(P) \in a^\theta\}\\
&=\{P \in \pf(\algebra B) \mid a \in h^{-1}(P)\}\\
&=\{P \in \pf(\algebra B) \mid h(a) \in P\}\\
&= h(a)^\theta
\end{align*}
as required.

\subsection{The double dual on categories}

Let $\topo C \in \mathscr C$, and write $\operatorname{Id}(\cpt(\topo C))$ for the elements of $\cpt(\topo C)$ consisting entirely of identity arrows.	Define $\varphi \from \topo C \to \pf(\cpt(\topo C))$ by:
\begin{itemize}
\item
for an object $x$:
\[x \mapsto x^\varphi \coloneqq \{A \in \operatorname{Id}(\cpt(\topo C)) \mid 1_x \in A\},\]

\item
for an arrow $c$:
\[c \mapsto c^\varphi \coloneqq \{A \in \cpt(\topo C) \mid c \in A\}.\]

\end{itemize}
We first verify that $x^\varphi$ is an ultrafilter of $\D[\cpt(\topo C)]$, and $c^\varphi$ is a prime filter of $\cpt(\topo C)$, so $\varphi$ indeed has codomain $\pf(\cpt(\topo C))$.

 For $x^\varphi$, the identity-assigning map is a section on a clopen, so $x^\varphi$ is nonempty. If $A, B \in \operatorname{Id}(\cpt(\topo C))$ then $A \cap B \in \operatorname{Id}(\cpt(\topo C))$, so $x^\varphi$ is downward directed. It is trivial that $x^\varphi$ is upward closed and clear that for $A \in \operatorname{Id}(\cpt(\topo C))$ precisely one of $A$ and $\A(A)$ is in $x^\varphi$.

 For $c^\varphi$, as $\dd$ is a local homeomorphism from the arrows to the objects of $\topo C$, there exists some section $s$ on an open  that has $c$ in its image. Since the space of objects of $\topo C$ has a basis of clopens, we can restrict $s$ to a clopen still with $c$ in its image. Hence $c^\varphi$ is nonempty. By a similar argument, $c^\varphi$ is down directed. It is trivial that $c^\varphi$ is upward closed and straightforward that it satisfies the primality condition. Clearly $\emptyset \not\in c^\varphi$, and $\emptyset$ is the $0$ of $\cpt(\topo C)$, so $c^\varphi$ is proper.

\begin{lemma}
The map $\varphi$ is a functor between the categories $\topo C$ and $\pf(\cpt(\topo C))$.
\end{lemma}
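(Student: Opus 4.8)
The plan is to verify, in turn, that $\varphi$ respects the four pieces of categorical data: it sends objects to objects and arrows to arrows (already done in the preceding paragraphs), it respects source and target, it respects identity arrows, and it respects composition. For each I would unravel the definitions of the dual constructions $\pf$ and $\cpt$ and reduce the claim to an elementary statement about sections on clopens.

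First I would verify that $\varphi$ respects source and target. The source of the arrow $c^\varphi$ in $\pf(\cpt(\topo C))$ is by definition $\D[c^\varphi]$, so I must show $\D[c^\varphi] = (\dd(c))^\varphi$. Unwinding: $\D[c^\varphi]$ consists of the domain elements $\D(A)$ for those sections $A$ containing $c$, and I would argue this coincides with the set of identity sections $B \in \operatorname{Id}(\cpt(\topo C))$ containing $1_{\dd(c)}$. The forward inclusion is immediate since if $c \in A$ then $1_{\dd(c)} = 1_{\dd(A)}\restr{\dom} \in \D(A)$; the reverse uses that any identity section $B$ containing $1_{\dd(c)}$ can be composed with a suitable section through $c$ to exhibit $\D(A) \supseteq B$-type membership. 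The target claim $\R[c^\varphi]^\uparrow = (\rr(c))^\varphi$ is analogous, using the definition $\R(A) = \{1_x \mid x \in \rr[A]\}$ and \Cref{filter-range} to realise the required sections.

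Next, respecting identities: I must check $\varphi(1_x)$ equals the identity arrow at $x^\varphi$ in $\pf(\cpt(\topo C))$, which by the construction of $\pf$ is $(x^\varphi)^\uparrow$ (upward closure in $\cpt(\topo C)$). This reduces to showing $(1_x)^\varphi = \{A \in \cpt(\topo C) \mid 1_x \in A\}$ is exactly the upward closure of the ultrafilter $x^\varphi$ of identity sections, which follows because any section containing $1_x$ lies above its restriction to an identity section at $x$. Finally, for composition I would show $c^\varphi \ccompo d^\varphi = (c \cdot d)^\varphi$ whenever $c \cdot d$ is defined, that is, $(c^\varphi \compo d^\varphi)^\uparrow = (c\cdot d)^\varphi$. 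The inclusion $\subseteq$ is straightforward: if $A \ni c$ and $B \ni d$ then $A \compo B \ni c \cdot d$. For $\supseteq$, given a section $E$ containing $c \cdot d$, I would use that $\dd$ is a local homeomorphism to find sections $A$ through $c$ and $B$ through $d$ with $A \compo B \leq E$, so $E$ lies in the upward closure.

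I expect the \textbf{main obstacle} to be the composition step's $\supseteq$ direction, where I must produce sections $A \ni c$, $B \ni d$ with $A \compo B$ below the given section $E \ni c \cdot d$; this requires carefully using the local-homeomorphism property of $\dd$ to pick local sections through $c$ and $d$ on matching clopen domains and then trimming their domains so that the composite section's domain sits inside that of $E$, invoking the Hausdorff/compactness properties of the Stone space of objects to keep everything clopen. Once the four verifications are assembled, the functoriality axioms follow from \Cref{axioms} together with \Cref{intersection}, exactly as in the construction of $\pf$ on objects.
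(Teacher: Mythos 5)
Your decomposition (objects/arrows done beforehand; then source--target, identities, composition) is exactly the paper's, and your easy inclusions coincide with the ones the paper proves. The genuine difference is in how the remaining inclusions are handled. The paper never proves them at all: at each step it establishes only the easy inclusion and then invokes maximality --- by \Cref{maximal} prime filters are maximal filters (and the objects involved are ultrafilters), so a single inclusion between two prime filters, or between an ultrafilter and a proper filter, already forces equality. Thus $\D[c^\varphi]\subseteq x^\varphi$, $1_{x^\varphi}\subseteq(1_x)^\varphi$ and $c_1^\varphi\ccompo c_2^\varphi\subseteq(c_1\ccompo c_2)^\varphi$ each finish in one line. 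What you identify as the \emph{main obstacle} --- producing, for a section $E\ni c\cdot d$, sections $A\ni c$, $B\ni d$ with $A\compo B\leq E$ --- is precisely the work this trick renders unnecessary. Your direct route can be made to work (since sections on clopens are open by \Cref{open_image}, one can restrict $A$ to a clopen neighbourhood of $\dd(c)$ inside $\dd[(A\compo B)\cap E]$ so that the composite lands inside $E$), so this is a legitimate, more hands-on alternative; but as sketched it has fixable soft spots. In particular, for the source claim your phrase ``exhibit $\D(A)\supseteq B$-type membership'' is not what is needed: $\D[c^\varphi]=x^\varphi$ is a literal set equality, so given an identity section $B\ni 1_{\dd(c)}$ you must produce $A\ni c$ with $\D(A)$ \emph{equal} to $B$, which requires patching, e.g.\ $A=(B\compo A')\pref B$ for any section $A'\ni c$, since $B\compo A'$ alone has too small a domain. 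Similarly \Cref{filter-range} is not the right tool for the target claim (it lives in an arbitrary algebra, not in $\cpt(\topo C)$); the relevant fact is simply that $A'\compo B$ is a section through $c$ with $\R(A'\compo B)\leq B$. In short: your approach buys a concrete, construction-level understanding of why the equalities hold, at the cost of several delicate section-manipulation arguments; the paper's approach buys brevity and robustness by routing everything through the maximality of prime filters.
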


\begin{proof}
Let $(c \from x \to y) \in \topo C$. To see that $c^\varphi \from x^\varphi \to y^\varphi$, suppose $A \in c^\varphi$. Then $c \in A$, so $x = \dd(c) \in \dd[A]$, so $1_x \in \D(A)$. Hence $\D(A) \in x^\varphi$. We conclude that $\D[c^\varphi] \subseteq x^\varphi$. Since both are ultrafilters, they are equal. Hence $\dd(c^\varphi) = \D[c^\varphi] = x^\varphi$. Similarly, we have $y = \rr(c) \in \rr[A]$, so $1_y \in \R(A)$. Hence $\R(A) \in y^\varphi$. We conclude that $\R[c^\varphi] \subseteq y^\varphi$. Since the ultrafilter $y^\varphi$ is upward closed, we get $\R[c^\varphi]^\uparrow \subseteq y^\varphi$, so since both these are ultrafilters, they are equal. Hence $\rr(c^\varphi) = \R[c^\varphi]^\uparrow = y^\varphi$.

Next we argue that for any object $x$, we have $(1_x)^\varphi = 1_{x^\varphi}$. The right-hand side is by definition $(x_\varphi)^\uparrow$. That is, an element $A$ of $1_{x^\varphi}$ is an upper bound for (that is, superset of) some identity section $A'$ that contains $1_x$. Hence $A$ itself contains $1_x$, so $A \in (1_x)^\varphi$. We conclude $(1_x)^\varphi \supseteq 1_{x^\varphi}$. Since both are prime filters, they are equal.

Let $(c_1 \from x \to y), (c_2 \from y \to z) \in \topo C$. We know that $c_1^\varphi \ccompo c_2^\varphi \from x \to z$ is defined, in particular is a prime filter. To see that $c_1^\varphi \ccompo c_2^\varphi = (c_1 \ccompo c_2)^\varphi$, we have 
\begin{align*}
&A \in c_1^\varphi\text{ and }B \in c_2^\varphi\\
\implies & c_1 \in A\text{ and }c_2 \in B\\
\implies & c_1 \ccompo c_2 \in A \compo B\\
\implies & A \compo B \in (c_1 \ccompo c_2)^\varphi.
\end{align*}
Hence $c_1^\varphi \ccompo c_2^\varphi \subseteq (c_1 \ccompo c_2)^\varphi$. Since both are prime filters, they are equal.
\end{proof}

To see that $\varphi$ is injective (on arrows, therefore on objects) let $c \neq d \in \topo C$. Choose any section on a clopen $A$ that contains $c$. If $\dd(c) = \dd(d)$, then $A$ cannot contain $d$, so $c^\varphi$ and $d^\varphi$ are not equal. If $\dd(c) \neq \dd(d)$, then we can find a clopen set $U$ of objects that contains $\dd(c)$ but not $\dd(d)$. By restricting $A$ to $U$, we obtain a section on a clopen containing $c$ but not $d$, so again $c^\varphi$ and $d^\varphi$ are not equal. 

To see that $\varphi$ is surjective (on arrows, therefore on objects), take a prime filter $P$ of sections on clopens of $\topo C$. Let $S = \bigcap P$. If $c \in S$ then $P \subseteq c^\varphi$, and so $P = c^\varphi$. Hence we only need to show $S$ cannot be empty. Let $A \in P$. As $P$ is a filter, $\bigcap P = \emptyset$ implies $\bigcap \{ B \in P \mid B \subseteq A\} = \emptyset$. As $A$ is compact (being the continuous image of a compact set), this implies $B_1 \cap \dots \cap B_n = \emptyset$ for some $B_1, \dots, B_n \in \{ B \in P \mid B \subseteq A\}$. As $P$ is downward directed, this implies $\emptyset \in P$---the required contradiction.

Since $\varphi$ is bijective, it is certainly star-coherent. To show $\varphi$ is an isomorphism in $\mathscr C$ it remains to show that $\varphi$ and its inverse are continuous. First we need a lemma.

\begin{lemma6}\label{subbasis}
In any Stone \'etale category, the (images of) sections on clopens provide a basis for the topology on the arrows.
\end{lemma6}

\begin{proof}
Let $U$ be an open set of arrows, and suppose $c \in U$. As $\dd$ is a local homeomorphism, $c$ has an open neighbourhood $V$, which we may assume is a subset of $U$, such that ${\dd}|_{V}$ provides a homeomorphism onto its image, which is also open. That is, ${\dd}|_V^{-1}$ is a section on an open. As the set of objects has a clopen basis, we may restrict ${\dd}|_V^{-1}$ to a clopen containing $\dd(c)$, giving the section on a clopen containing $c$ and included in $U$ that we seek.
\end{proof}

By the lemma, to show that $\varphi$ is an open map, it suffices to consider an arbitrary section on a clopen $A$ of $\topo C$. We claim that $\varphi[A]$ equals $A^\theta$ and is therefore in particular open. If $c^\varphi \in \varphi[A]$ (for $c \in A$), then as $A$ is a section on a clopen, $A \in c^\varphi$, so $c^\varphi \in A^\theta$. We conclude that $\varphi[A] \subseteq A^\theta$. Now let $P \in A^\theta$, and suppose for a contradiction that $P \not\in \varphi[A]$. Since for prime filters (which are maximal filters) inclusion implies equality, $P \not\in \varphi[A]$ implies for all $c \in A$ we have $c^\varphi \not\subseteq P$. That is, we can find, for each $c \in A$, a section on a clopen $B_c$ that contains $c$, but is not in $P$. We may assume (by the same reasoning as in the proof of \Cref{subbasis}) that each $B_c$ is a subset of $A$. Then as $A$ is compact, some finite collection $B_{c_1}, \dots, B_{c_n}$ cover $A$. That is, $A = B_{c_1} \pref \dots \pref B_{c_n}$ in $\cpt(\topo C)$. As $P$ is prime and contains $A$, it must contain some $B_{c_i}$---the required contradiction.

Continuity of $\varphi$ now follows straightforwardly. That $\varphi[A] = A^\theta$ and $\varphi$ is injective implies $\varphi^{-1}(A^\theta) = A$, which is open if $A$ is a section on a clopen. The set of $A^\theta$'s such that $A$ is a section on a clopen provides a basis for $\pf(\cpt(\topo C))$, so we are done.

We now show that our isomorphisms together give a natural transformation from the identity functor to the double dual. For each $\topo C \in \mathscr C$, denote now the isomorphism just described by $\varphi_{\topo C}$. Then given $\topo C, \topo D \in \mathscr C$ and a star-coherent multivalued functor $F \from \topo C \to \topo D$, we are required to show that $\pf(\cpt(F)) \circ \varphi_{\topo C} = \varphi_{\topo D} \circ F$ (as multivalued functors). The right-hand side sends an arrow $c \in \topo C$ to $\{d^\varphi \mid d \in F(c)\}$. On the left-hand side, $c$ is first sent to $c^\varphi$, then $\pf(\cpt(F))$ sends this to the set of prime filters of $\cpt(F)$ that partition
\begin{align*}
\{A \in \cpt(\topo D) \mid \cpt(F)(A) \in c^\varphi\} &= \{A \in \cpt(\topo D) \mid F^{-1}(A) \in c^\varphi\}\\
&=\{A \in \cpt(\topo D) \mid c \in F^{-1}(A)\}\\
&=\{A \in \cpt(\topo D) \mid F(c) \cap A \neq \emptyset\}.
\end{align*}
And as the prime filters of $\cpt(F)$ are of the form $d^\varphi$,  the set of prime filters partitioning $\{A \in \cpt(\topo D) \mid F(c) \cap A \neq \emptyset\}$ is $\{d^\varphi \mid d \in F(c)\}$, exactly as required. This completes the proof of \Cref{main-theorem}.

We now complete the proof of the restricted duality of \Cref{restricted}.

\begin{lemma5}\label{proper}
If $F$ is a functor, the homomorphism $\cpt (F) \from \cpt(\topo D) \to \cpt(\topo C)$ of $\{\compo, \A, \R, \pref\}$-algebras is locally proper.
\end{lemma5}

\begin{proof}
Take $P$ to be an arbitrary prime filter in $\cpt(\topo C)$. By the preceding discussion we know $P$ is of the form $c^\varphi$ for some arrow $c$ of $\topo C$. So 
\begin{align*}\cpt(F)^{-1}(P) &= \{A \in \cpt(\topo D) \mid c \in \cpt(F)(A)\}\\
& = \{A \in \cpt(\topo D) \mid c \in F^{-1}(A)\}\\
& = \{A \in \cpt(\topo D) \mid F(c) \in A\}\\
& = F(c)^\varphi,
\end{align*}
which we know is a prime filter of $\cpt(\topo D)$.
\end{proof}

Now to finish the proof of \Cref{restricted}, note that by \Cref{proper0} and \Cref{proper}, the double dual of any \emph{locally proper} homomorphism is \emph{locally proper}, and the double dual of any star-coherent \emph{functor} is a  \emph{functor}. Then $\theta$ and $\varphi$ provide the required natural isomorphisms, since isomorphisms of representable $\{\compo, \A, \R\}$-algebras are locally proper, and isomorphisms of topological categories are functors.

\section{Word-to-word functions}\label{applications}

Let $\Sigma$ be a finite alphabet. Recall that the \defn{rational} functions over $\Sigma$, which we denote $\operatorname{Rat_f}(\Sigma)$, are the partial functions from $\Sigma^*$ to $\Sigma^*$ realisable by a one-way transducer. The \defn{regular} functions over $\Sigma$, which we denote $\operatorname{Reg_f}(\Sigma)$, are the partial functions from $\Sigma^*$ to $\Sigma^*$ realisable by a two-way transducer. (See \cite{10.1145/2984450.2984453} for an overview of these concepts.) The rational and the regular functions are both closed under $\compo$, $\A$, $\R$, and $\pref$ and hence are both $\{\compo, \A, \R, \pref\}$-algebras of partial functions with base $\Sigma^*$. Clearly
\[\operatorname{Rat_f}(\Sigma) \subseteq \operatorname{Reg_f}(\Sigma),\]
and so $\operatorname{Rat_f}(\Sigma)$ is a subalgebra of $\operatorname{Reg_f}(\Sigma)$. For both $\operatorname{Rat_f}(\Sigma)$ and $\operatorname{Reg_f}(\Sigma)$, the subalgebra of subidentity functions is the set of identity functions on \emph{regular languages} (by a simple `forgetting the output' argument). The Stone dual of the regular languages over $\Sigma$ is known to be (the underlying space of) the profinite completion $\widehat{\Sigma^*}$ of the monoid $\Sigma^*$ \cite{Pippenger_1997}. Hence both $\operatorname{Rat_f}(\Sigma)$ and $\operatorname{Reg_f}(\Sigma)$ have duals whose space of objects is $\widehat{\Sigma^*}$.

In the setting of \emph{languages}, duality provides a powerful and, in principle, fully general method for characterising any family of languages that forms a sublattice of the regular languages \cite{gehrke2008duality, gehrke2009stone}. For example, for the subalgebra of star-free languages this yields the characterisation by the profinite equation $x^\omega x = x^\omega$,  equivalent to Sch\"utzenberger's celebrated characterisation by aperiodicity of the syntactic monoid of the given language \cite{schutzenberger1965finite}. It would be useful to have similar tools available for regular functions. 

Since the category of representable $\{\compo, \A, \R, \pref\}$-algebras with homomorphisms is a concrete category, the embedding $\operatorname{Rat_f}(\Sigma) \hookrightarrow \operatorname{Reg_f}(\Sigma)$ is a monomorphism in that category. 

\begin{problem}
Is the embedding $\operatorname{Rat_f}(\Sigma) \hookrightarrow \operatorname{Reg_f}(\Sigma)$ locally proper?
\end{problem}

The answer is no, by the following general result.

\begin{proposition}\label{isomorphism}
Let $h \from \algebra A \to \algebra B$ be a locally proper homomorphism of $\{\compo, \A, \R, \pref\}$-algebras such that the induced map $\D[\algebra A] \to \D[\algebra B]$ is an isomorphism. Then $h$ is an isomorphism.
\end{proposition}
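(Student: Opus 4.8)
The plan is to dualise: I will show that $\pf(h)$ is an isomorphism in $\mathscr C'$ and then transport this back across the natural isomorphism $\theta$ of \Cref{are_dual}. Since $h$ is locally proper, \Cref{proper0} guarantees that $\pf(h) \from \pf(\algebra B) \to \pf(\algebra A)$ is a genuine functor (not merely multivalued), and \Cref{star} tells us it is star coherent. The object component of $\pf(h)$ is, by classical Stone duality, precisely the dual of the induced Boolean homomorphism $\D[\algebra A] \to \D[\algebra B]$; by hypothesis this is an isomorphism, so its Stone dual $\bar F$ is a homeomorphism between the object spaces of $\pf(\algebra B)$ and $\pf(\algebra A)$.

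Next I would check that the arrow component of $\pf(h)$ is a bijection of prime filters. For injectivity, if prime filters $P_1, P_2$ of $\algebra B$ satisfy $h^{-1}(P_1) = h^{-1}(P_2) =: Q$, then by the source computation in \Cref{low_functor} together with injectivity of $\bar F$ the filters $P_1$ and $P_2$ share a source, while $Q$ witnesses $\pf(h)(P_1) \cap \pf(h)(P_2) \neq \emptyset$; star injectivity then forces $P_1 = P_2$. For surjectivity, given an arbitrary prime filter $Q$ of $\algebra A$, set $\mu \coloneqq h[\D[Q]]$, so that $(h|_{\D[\algebra A]})^{-1}(\mu) = \D[Q]$ by bijectivity of $\bar F$; star surjectivity (\Cref{star}) then yields a prime filter $P$ of $\algebra B$ with $Q \in \pf(h)(P)$, and local properness gives $Q = h^{-1}(P)$.

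The main obstacle is the topological step: promoting the continuous bijection $\pf(h)$ on arrows to a homeomorphism. For this I would use that both source maps $\dd$ are local homeomorphisms and that they intertwine with $\bar F$ via $\dd \circ \pf(h) = \bar F \circ \dd$. Given an arrow $P$, I would choose a source-section neighbourhood on which $\dd$ restricts to a homeomorphism, and likewise near $\pf(h)(P)$; shrinking the first neighbourhood by continuity of $\pf(h)$, one expresses $\pf(h)$ there as the composite of $\dd$, the homeomorphism $\bar F$, and the inverse of a local homeomorphism, hence as a homeomorphism onto an open set. Thus $\pf(h)$ is a local homeomorphism, and a bijective local homeomorphism is a homeomorphism. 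Being a bijective bicontinuous functor, $\pf(h)$ is an isomorphism of Stone \'etale categories, and in particular an isomorphism in $\mathscr C'$.

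Finally, the naturality of $\theta$ established in \Cref{are_dual} gives $\theta_{\algebra B} \circ h = \cpt(\pf(h)) \circ \theta_{\algebra A}$, whence $h = \theta_{\algebra B}^{-1} \circ \cpt(\pf(h)) \circ \theta_{\algebra A}$. As $\theta_{\algebra A}$ and $\theta_{\algebra B}$ are isomorphisms and $\cpt$ sends the isomorphism $\pf(h)$ to an isomorphism, $h$ is a composite of isomorphisms and therefore an isomorphism.
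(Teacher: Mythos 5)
Your proof is correct, and it shares the paper's overall skeleton---dualise $h$, show $\pf(h)$ is an isomorphism of Stone \'etale categories, then transport back along the natural isomorphism $\theta$---but the two crucial steps are executed differently. For bijectivity on arrows, the paper invokes the abstract fact that any star-coherent functor bijective on objects is bijective on arrows, whereas you re-derive this concretely from \Cref{low_functor}, \Cref{star}, and local properness; the content is essentially the same. The genuine divergence is in proving that $\pf(h)$ is open. The paper first uses surjectivity of $\pf(h)$ to conclude that $h$ is injective, reduces to the case of a subalgebra inclusion with $\D[\algebra A] = \D[\algebra B]$, computes $\pf(h)(P) = P \cap \algebra A$, identifies the algebraic inverse as upward closure in $\algebra B$, and then verifies openness on the basic open sets $\theta_{\algebra B}(b)$ by hand. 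You instead argue purely topologically: from the intertwining relation $\dd \circ \pf(h) = \bar F \circ \dd$, the fact that both source maps are local homeomorphisms, and the fact that the object component $\bar F$ is a homeomorphism (classical Stone duality applied to the hypothesised Boolean isomorphism), you express $\pf(h)$ locally as $({\dd}|_V)^{-1} \circ \bar F \circ {\dd}|_U$, a homeomorphism onto an open set, and a bijective local homeomorphism is a homeomorphism. Your route avoids the reduction to inclusions and the explicit filter computation, and it isolates a reusable structural fact: any continuous functor between \'etale categories that is bijective on arrows and a homeomorphism on objects is automatically an isomorphism of topological categories. The paper's route, in exchange, yields an explicit description of the inverse functor (upward closure in $\algebra B$), which is informative in its own right. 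One gloss you share with the paper: neither argument spells out that the inverse of a bijective bicontinuous star-coherent functor is again star-coherent, though this is immediate from bijectivity and functoriality.
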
 

\begin{proof}
If the hypotheses hold, then the dual $F \from \pf(\algebra B) \to \pf(\algebra A)$ of $h$ is a functor and a bijection on objects. But any star-coherent functor that is bijective on objects must be bijective on arrows, and thus $F$ is an algebraic isomorphism of categories. It follows in particular (surjectivity of $F$) that $h$ is injective, so we may assume $h$ is an inclusion of a subalgebra, and the induced $\D[\algebra A] \to \D[\algebra B]$ is the identity.

We know that $F$ is continuous, so to show that $F$ is an isomorphism of \emph{topological} categories it only remains to show $F$ is an open map. From there the conclusion that $h$ is an isomorphism is immediate, by duality.

The topology on $\pf(\algebra B)$ is generated by sets of the form $\theta_{\algebra B}(b)$ for $b \in \algebra B$. We must show that each $F[\theta_{\algebra B}(b)]$ is open in $\pf(\algebra A)$. For $P \in \algebra B$, we have $F(P) = h^{-1}(P) = P \cap \algebra A$. Let $\mu = \D[P] = \D[F(P)]$. Choose some $a \in F[P] \subseteq P$. We know $F[P] = (\mu \compo a)^\uparrow$, with upward closure taken in $\algebra A$, and $P =(\mu \compo a)^\uparrow$, with upward closure taken in $\algebra B$. That is the algebraic inverse $F^{-1}$ of $F$ is given by taking the upward closure in $\algebra B$. So, if $b \in P$ then there is some $a \leq b \in P \cap \algebra A$, that is, $F(P) \in \theta_{\algebra A}(a)$, and $\theta_{\algebra A}(a)$ is included in $F[\theta_{\algebra B}(b)]$. Hence $F[\theta_{\algebra B}(b)]$ is open, for arbitrary $b \in \algebra B$.
\end{proof}

Since $\operatorname{Rat_f}(\Sigma)$ and $\operatorname{Reg_f}(\Sigma)$ have the same Boolean subalgebra of domain ele\-ments---the regular languages encoded as subidentity functions---if the embedding $\operatorname{Rat_f}(\Sigma) \hookrightarrow \operatorname{Reg_f}(\Sigma)$ were locally proper, \Cref{isomorphism} would apply. But $\operatorname{Rat_f}(\Sigma)$ is strictly included in $\operatorname{Reg_f}(\Sigma)$, hence the embedding cannot be locally proper.

We conclude with a problem.

\begin{problem}
Give descriptions of the duals of $\operatorname{Rat_f}(\Sigma)$ and $\operatorname{Reg_f}(\Sigma)$ and of the dual of the embedding $\operatorname{Rat_f}(\Sigma) \hookrightarrow \operatorname{Reg_f}(\Sigma)$.
\end{problem}

\bibliography{../brettbib}{}
\end{document}